\documentclass[a4]{amsart}

\usepackage{amssymb}
\usepackage[all]{xy}
\usepackage{amsmath, amsfonts, amsthm , amssymb} 

\input xypic 
\usepackage{tikz} 

\newtheorem{theorem}{Theorem}[section]
\newtheorem{proposition}[theorem]{Proposition}
\newtheorem{lemma}[theorem]{Lemma}
\newtheorem{corollary}[theorem]{Corollary}

\theoremstyle{definition}

\newtheorem{example}[theorem]{Example}

\newcommand{\uxa}{\ensuremath{(\underline{X},\underline{A})}}

\newcommand{\zk}{\ensuremath{\mathcal{Z}_{K}}}

\newcommand{\conn}{\ensuremath{\#}}  

\newcounter{bean}

\newcommand{\namedright}[3]{\ensuremath{#1\stackrel{#2}
 {\longrightarrow}#3}}
\newcommand{\nameddright}[5]{\ensuremath{#1\stackrel{#2}
 {\longrightarrow}#3\stackrel{#4}{\longrightarrow}#5}}
\newcommand{\namedddright}[7]{\ensuremath{#1\stackrel{#2}
 {\longrightarrow}#3\stackrel{#4}{\longrightarrow}#5
  \stackrel{#6}{\longrightarrow}#7}} 

\newcommand{\larrow}{\relbar\!\!\relbar\!\!\rightarrow}
\newcommand{\llarrow}{\relbar\!\!\relbar\!\!\larrow}

\newcommand{\llnamedright}[3]{\ensuremath{#1\stackrel{#2}
 {\llarrow}#3}}
\newcommand{\llnameddright}[5]{\ensuremath{#1\stackrel{#2}
 {\llarrow}#3\stackrel{#4}{\llarrow}#5}}

\newcommand{\qqed}{\hfill\Box}

\begin{document}
\title[Moment-angle manifolds]{Moment-angle manifolds associated to neighbourly triangulations of spheres} 
\author{Amaranta Membrillo Solis} 
\address{School of Mathematical Sciences, Queen Mary University of London, London E1 4NS, UK} 
\email{i.a.membrillosolis@qmul.ac.uk}
\author{Stephen Theriault}
\address{School of Mathematical Sciences, University of Southampton, Southampton
SO17 1BJ, UK} 
\email{S.D.Theriault@soton.ac.uk}

\subjclass[2020]{Primary 55P15, 57S12}
\keywords{moment-angle manifold, neighbourly triangulation, connected sum, homotopy type}

\begin{abstract} 
We show that a moment-angle manifold associated to a neighbourly triangulation of an odd 
dimensional sphere is homotopy equivalent to a connected sum of products of two spheres, 
resolving a problem of Buchstaber and Panov. The methods are entirely homotopy theoretic, 
allowing for an extension to a corresponding result in the case of generalized moment-angle 
manifolds. 
\end{abstract}

\maketitle 

\section{Introduction} 

Moment-angle complexes are of fundamental importance in toric topology and play significant  
roles in other areas of mathematics. To each finite simplicial complex $K$ there is an associated 
topological space called a moment-angle complex $\zk$. The cohomology of $\zk$ is the 
commutator subalgebra of the 
Stanley-Reisner face ring of $K$. The space $\zk$ is homotopy equivalent to the complement 
of the complex coordinate subspace arrangement associated to $K$. If $K$ is a triangulation 
of a sphere then $\zk$ is a manifold equipped with a canonical torus action, and this manifold 
structure is smooth if $K$ is the underlying complex of a complete simplicial fan~\cite{BP}. 
Specializing further, if~$K$ is the boundary of the dual of a simple convex polytope then $\zk$ 
admits a geometric model as an intersection of real quadrics in $\mathbb C^{m}$ \cite{LdM}. 

An important problem is to understand the homotopy types of moment-angle complexes 
in general and moment-angle manifolds in particular. In this paper we identify the homotopy 
type of moment-angle manifolds associated to neighbourly triangulations of odd dimensional 
spheres, resolving a problem posed by Buchstaber and Panov. Our methods are entirely 
homotopy theoretic, so they also extend to generalized moment-angle manifolds, 
which are special cases of polyhedral products. 

To describe our results more precisely, we set some notation and define terms. 
Let $K$ be a simplicial complex on the vertex set $[m]$.  For $1\leq i\leq m$,
let $(X_{i},A_{i})$ be a pair of pointed $CW$-complexes, where $A_{i}$ is 
a pointed subspace of~$X_{i}$. Let $\uxa=\{(X_{i},A_{i})\}_{i=1}^{m}$ be 
the sequence of $CW$-pairs. For each simplex (face) $\sigma\in K$, let 
$\uxa^{\sigma}$ be the subspace of $\prod_{i=1}^{m} X_{i}$ defined by
\[\uxa^{\sigma}=\prod_{i=1}^{m} Y_{i}\qquad
       \mbox{where}\qquad Y_{i}=\left\{\begin{array}{ll}
                                             X_{i} & \mbox{if $i\in\sigma$} \\
                                             A_{i} & \mbox{if $i\notin\sigma$}.
                                       \end{array}\right.\]
The \emph{polyhedral product} determined by \uxa\ and $K$ is
\[\uxa^{K}=\bigcup_{\sigma\in K}\uxa^{\sigma}\subseteq\prod_{i=1}^{m} X_{i}.\] 
If all pairs $(X_{i},A_{i})$ are the same the polyhedral product is written as $(X,A)^{K}$. 
The \emph{moment-angle complex} is the polyhedral product $\mathcal{Z}_{K}=(D^{2},S^{1})^{K}$. 
A \emph{generalized moment-angle complex} is the polyhedral product $(D^{k},S^{k-1})^{K}$ 
for some integer $k\geq 2$. 

When $K$ is a triangulation of a sphere, $\zk$ is a topological manifold, known as a 
\emph{moment-angle manifold}. There are few cases where the homotopy type of 
a moment-angle manifold is known and in those cases $\zk$ is smooth and the diffeomorphism 
type is known. This comes from studying moment-angle manifolds from the viewpoint 
of intersections of quadrics~\cite{LMV,Me,Bo,BM}. McGavran~\cite{M} in the case of a $2$-simplex 
and Bosio and Meersseman~\cite{BM} in the case of a $3$-simplex considered convex polytopes 
obtained from the simplex by successive vertex cuts, and showed that the moment-angle manifold 
associated to the dual of the boundary of such a polytope has the diffeomorphism type of a 
connected sum of products of two spheres. Gitler and López de Medrano extended this to 
even dimensional polytopes that are dual neighbourly \cite{GLdM} and odd dimensional polytopes 
with additional conditions. There is an isolated instance of a moment-angle manifold known 
to be diffeomorphic to a connected sum involving a product of three or more spheres~\cite{I}, while 
recent work of Kovyrshina and Panov~\cite{KP} suggests more instances like this should exist. 

A simplicial complex \(K\) is \(\ell\)-neighbourly if every subset of \(\ell+1\) vertices spans a 
simplex of \(K\). A triangulation \(K\) of \(S^d\) is polytopal if it is combinatorially equivalent 
to the boundary of a simplicial convex \((d+1)\)-polytope. The family of neighbourly 
non-polytopal triangulations of \(S^d\) is much larger than the neighbourly polytopal family, and this 
difference already appears in classifications of triangulated spheres with few vertices. For 
example, for triangulations of $S^{3}$: there are $4$ neighbourly triangulations on $8$ vertices, of 
which $3$ are polytopal~\cite{Ba}; there are \(50\) neighbourly triangulations on \(9\) vertices, 
of which only \(23\) are polytopal \cite{AS}; and there are \(3540\) neighbourly triangulations on $10$ 
vertices, of which only between \(333\) and \(432\) are polytopal \cite{A}. Asymptotically, for a 
fixed dimension $d$ and letting the number of vertices increase, almost all neighbourly triangluations 
of $S^{d}$ are non-polytopal: for \(d\ge 5\), the number of \(\lfloor d/2\rfloor\)-neighbourly \((d-1)\)-spheres on \(n\) vertices is at least \(2^{\Omega\!\left(m^{\lfloor (d-1)/2\rfloor}\right)}\), whereas the total number of \(d\)-polytopes with \(m\) vertices is only \(2^{\Theta(m\log m)}\) \cite{NZ}.

The Gitler and López de Medrano  result shows that if $K$ is a neighbourly polytopal triangulation 
of an odd dimensional sphere then $\zk$ is diffeomorphic to a connected sum of products of two spheres. 
Since neighbourly non-polytopal triangulations are much more numerous, it is natural to ask 
what happens in those cases. We develop new methods to give a homotopy theoretic answer. 

\begin{theorem} 
   \label{main} 
   Let $K$ be a neighbourly triangulation of $S^{2n+1}$ on the vertex set $[m]=\{1,\ldots,m\}$. 
   Then $\zk$ is homotopy equivalent to a connected sum of products of two spheres. 
\end{theorem}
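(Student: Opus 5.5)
Neighbourliness here means $n$-neighbourly, i.e.\ every set of $n+1$ vertices spans a simplex; this is the maximal neighbourliness possible for a non-simplex triangulation of $S^{2n+1}$. The plan is to combine a homological computation with a homotopy theoretic recognition principle for connected sums.

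\emph{Homology.} Write $M=\zk$; it is a closed orientable manifold of dimension $d=m+2n+2$. Hochster's formula gives
\[H^{*}(\zk)\cong\bigoplus_{J\subseteq[m]}\widetilde{H}^{*-|J|-1}(K_{J}),\]
where $K_J$ is the full subcomplex on $J$. Each $K_J$ with $J\neq\emptyset,[m]$ is $(n-1)$-connected by neighbourliness. By Alexander duality in $K\cong S^{2n+1}$, its reduced homology is concentrated in degrees $n$ and $n+1$, and $\widetilde H_{n+1}(K_J)\cong\widetilde H^{n-1}(K_{[m]\setminus J})=0$. So each proper $K_J$ has homology only in degree $n$, and that homology is free, since it is dual to $\widetilde H^{n}$ of the complement, which is torsion free by the same argument. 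By Alexander duality, $K_J$ is a $(n-1)$-connected complex with free homology concentrated in degree $n$, so it is a wedge of $S^n$ (after a suspension if $n\le 1$, which is harmless). Consequently $\widetilde H^{*}(\zk)$ is free, concentrated in degrees $|J|+n+1$ together with the top class, and Poincar\'e duality pairs the $J$ summand with the $[m]\setminus J$ summand.

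\emph{Homotopy.} Next I would use the known decomposition $\Sigma\zk\simeq\bigvee_J\Sigma^{2+|J|}|K_J|$. This shows that $\Sigma\overline M$ is a wedge of spheres, where $\overline M$ is the $(d-1)$-skeleton, i.e.\ $M$ minus a disc. The goal is to prove that $\overline M$ itself is a wedge of spheres $\bigvee S^{|J|+n+1}$ and that the attaching map of the top cell is a sum of Whitehead products $[\iota_J,\iota_{J^{c}}]$, one for each dual pair, with no extra terms. The attaching map of the top cell of $\#(S^{p}\times S^{q})$ is exactly such a sum, so this would identify $M$ up to homotopy with a connected sum of products $S^{|J|+n+1}\times S^{m-|J|+n+1}$.

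To desplit $\overline M$, I would use the Ganea/retract approach: show that $\Omega M$ (or $\Omega\overline M$) is homotopy equivalent to the loops of a wedge, following the Beben--Theriault style argument for manifolds with a homologically split skeleton. Alternatively, I would induct on $m$ by removing a vertex $v$. The link and star give a pushout $\zk\simeq$ a union of $(D^2\times\mathcal{Z}_{\mathrm{lk}\,v})$ and $S^1\times\mathcal{Z}_{K\setminus v}$, and $K\setminus v$ is a neighbourly disc whose full subcomplexes are again wedges of $n$-spheres. Using the fact that all $K_J$ are suspension-like (wedges of spheres), Grbi\'c--Theriault style results show that $\mathcal Z_{K_J}$ and $\mathcal Z_{K\setminus v}$ are wedges of spheres. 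So $\overline M\simeq\mathcal Z_{K\setminus v}$ up to a skeleton comparison, and $\overline{M}$ is a wedge of spheres.

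\emph{Main obstacle.} The hard part will be controlling the top cell attaching map. Modulo Whitehead products, the map is determined by the cup-product form, and that form is the perfect pairing above, unimodular over $\mathbb Z$ because the homology is free. I must also rule out higher terms: iterated Whitehead products and Hopf-invariant compositions $S^{d-1}\to S^{k}\to\cdots$. My first approach would be a connectivity argument: all spheres have dimension at least $n+2$, so a degree count keeps iterated Whitehead products and composites within a range where they are either null or detected by cohomology. Where that fails, I would use the neighbourly symmetry: realize the pairing by a change of basis of the wedge, and appeal to a Sullivan-style or Wall-type classification of highly-connected "stably framed" cell structures, verified by the Hopf invariant and the freeness of the loop space homology. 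I expect this step, not the homology computation, to require the genuinely new ideas.
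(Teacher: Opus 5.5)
Your homology computation is sound. It is essentially a direct Alexander-duality proof of the input the paper quotes as Theorem~\ref{neighbourlyinW}, together with the pairing of the $J$ and $[m]\setminus J$ summands. But neither homotopy-theoretic step is actually carried out, and the central one is missing its key idea.

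\textbf{Desuspending $\overline{M}$.} Knowing that every $|K_J|$ is a wedge of spheres does not by itself make $\mathcal{Z}_{K\setminus v}$ a wedge of spheres. $K$ itself shows this principle fails: all its full subcomplexes are spheres or wedges of spheres, yet $\mathcal{Z}_K$ is not a wedge. This is why the paper needs the nontrivial Theorem~\ref{ZK/i}. Your claim $\overline{M}\simeq\mathcal{Z}_{K\setminus v}$ is also false. $H_*(\mathcal{Z}_{K\setminus v})$ only contains the summands with $v\notin J$, whereas $\overline{M}$ carries every $J\notin K$, $J\neq[m]$. A loop space splitting of $\Omega M$ would not identify $\overline{M}$ either. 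The paper instead does three things:
\begin{itemize}
\item[(i)] it maps each $\mathcal{Z}_{K\backslash i}$ into $\overline{\mathcal{Z}_K}$, which is possible because the $[m]$-summand is missed;
\item[(ii)] it wedges these maps over all $i\in[m]$ to get a homology surjection;
\item[(iii)] it keeps one copy of each summand, giving a homotopy equivalence.
\end{itemize}

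\textbf{The top cell.} This is the decisive gap, which you leave open, and your first suggested approach cannot work. The spheres of $\overline{\mathcal{Z}_K}$ have dimensions $n+1+|I|$ with $n+2\le|I|\le m-n-2$, while $f\in\pi_{2n+m+1}$. So even the linear components $S^{2n+m+1}\to S^{n_k}$ lie in stems $\ge 2n+2$, which are generally nonzero. For large $m$ one is also far outside any metastable range, so torsion composites through iterated Whitehead products can occur as well.

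None of these components is detected by cohomology; the cup product form only fixes the coefficients of the $[\nu_i,\nu_j]$. For instance, $S^3$-bundles over $S^4$ with a section have the cohomology ring of $S^3\times S^4$ but need not be homotopy equivalent to it. Wall-type classifications concern highly connected manifolds with cells in the middle dimensions, not cells spread over $2n+3,\ldots,m-1$. Sullivan models see only rational information.

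\textbf{What the paper uses instead.} The missing idea is the coordinate-projection retractions $\mathcal{Z}_K\to\mathcal{Z}_{K_I}$ for proper $I\subset[m]$.
\begin{itemize}
\item[(a)] The paper builds the wedge decomposition $\overline{p}$ so that each projection $p_I$ factors through $\mathcal{Z}_K$. Hence $p_I\circ f\simeq *$ (Lemma~\ref{fscnull}). So $f$ lifts to the fibre $F$ of $\bigvee S^{n_k}\to\prod S^{n_k}$, which kills all linear components.
\item[(b)] Hilton--Milnor splits $\Omega F$. Factors with $d(\beta)=2n+m$ give multiples of $[\nu_i,\nu_j]$. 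Their coefficients are fixed by Stasheff's rational equivalence with the connected sum and by the integral cup product.
\item[(c)] Rationally nontrivial Hopf-invariant-type components on lower brackets are excluded rationally.
\item[(d)] Torsion components on a bracket $w_\beta$ of degree below $2n+m$ vanish. A degree count gives $|I_1|+\cdots+|I_r|<m$ for the spheres involved. So the projection onto that Hilton--Milnor factor factors through $\Omega G$, over $\mathcal{Z}_K$, via the retraction onto $\mathcal{Z}_{K_{I_1\cup\cdots\cup I_r}}$. There the lift of $f$ becomes null.
\end{itemize}
Without an argument of this kind, your proposal does not prove the theorem.
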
 

If $K$ is a neighbourly non-polytopal triangulation of $S^{2n+1}$, it is not known in general 
whether~$\zk$ has a smooth structure. So it is natural to only expect that Theorem~\ref{main} 
might be upgraded to a homeomorphism. A key ingredient in our method is the homotopy 
type of a punctured copy of~$\zk$; while this is homotopy equivalent to the same wedge of 
spheres as the corresponding punctured connected sum, it is not clear whether this homotopy equivalence 
allows for the sort of embedding that would be needed to improve our result to a homeomorphism. 

If $K$ is a neighbourly triangulation of $S^{2n}$ then the homotopy type of the punctured 
copy of $\zk$ is unknown in general, and this currently blocks further progress. 

As our methods are purely homotopy theoretic, they can be extended. A generalized moment-angle 
complex $(D^{k},S^{k-1})^{K}$ for $k\geq 2$ is also a manifold if $K$ is a triangulation of a sphere.  
The authors are unaware whether the diffeomorphism type or homotopy type of a generalized 
moment-angle manifold is known even in the simplest case when $K$ is the boundary of an $m$-gon. 
We address this by proving the following. 

\begin{theorem} 
   \label{maingmam} 
   Let $K$ be a neighbourly triangulation of $S^{2n+1}$ on the vertex set $[m]$. If $k\geq 2$ and  
   then $(D^{k},S^{k-1})^{K}$ is homotopy equivalent to a connected sum of products of two spheres. 
\end{theorem}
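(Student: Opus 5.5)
Write $\mathcal{Z}=(D^{k},S^{k-1})^{K}$. Since $K$ triangulates $S^{2n+1}$ on $m$ vertices, $\mathcal{Z}$ is a closed, orientable, simply-connected manifold of dimension $d=(k-1)m+2n+2$. The plan is to run the argument we intend for Theorem~\ref{main} with $S^{1}$ replaced by $S^{k-1}$ throughout. There are three steps, taken in order.

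\textbf{Step 1: homotopy type of the punctured manifold.} Let $\mathcal{Z}_{0}$ be $\mathcal{Z}$ with a small open disc removed. We identify $\mathcal{Z}_{0}$ with the polyhedral product over $K$ minus a facet, or equivalently build it from the full subcomplexes $K_{I}$ with $I\neq[m]$. By the Bahri--Bendersky--Cohen--Gitler decomposition,
\[\Sigma\mathcal{Z}\simeq\bigvee_{I\subseteq[m]}\Sigma^{2+(k-2)|I|}|K_{I}|.\]
Neighbourliness is used here. $K$ is $(n+1)$-neighbourly, so every $K_{I}$ with $I\neq[m]$ has its $n$-skeleton complete. By Alexander duality in $S^{2n+1}$, $\tilde H_{*}(K_{I})$ is then concentrated in degrees $n$ and $n+1$, and it is torsion free because it is dual to the homology of the complementary full subcomplex. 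The homology of $\mathcal{Z}_{0}$ is therefore free and concentrated in two degrees for each $|I|$. Moreover $\mathcal{Z}_{0}$ is highly connected relative to its dimension: it is at least $(k-1)(n+1)+n$-connected and has dimension about $d-1$.

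We then show that $\mathcal{Z}_{0}$ is homotopy equivalent to a wedge of spheres $\bigvee S^{a_{i}}$, not merely after one suspension. I would try to do this by showing directly that $\mathcal{Z}_{0}$ is a co-H-space. The route is to desuspend the BBCG decomposition using a Beben--Theriault style argument for polyhedral products over $K$ minus a facet (a "minimally non-Golod" retraction). The alternative is to note that $\mathcal{Z}_{0}$ has cells only in a range where a free-homology complex of that connectivity is a wedge of spheres. The second route works when the dimension is less than three times the connectivity, which should hold here because the homology lives only in degrees near $(k-1)|I|+n+1$ and $(k-1)|I|+n+2$.

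\textbf{Step 2: pairing the spheres.} $\mathcal{Z}$ is obtained from $\mathcal{Z}_{0}$ by attaching a top cell via a map $f\colon S^{d-1}\to\bigvee S^{a_{i}}$. Poincaré duality pairs the spheres into pairs with $a_{i}+b_{i}=d$; concretely, $\Sigma^{\cdot}|K_{I}|$ is paired with the summand for the complement $[m]\setminus I$. By the Hilton--Milnor theorem, $f$ is a sum of Whitehead products $[\iota_{a_{i}},\iota_{b_{j}}]$ plus higher terms.

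\textbf{Step 3: recognizing the connected sum.} Cup products on $\mathcal{Z}$ are determined by the Whitehead-product coefficients. These cup products are computable from the Stanley--Reisner ring (via Hochster's formula) and give a unimodular pairing between the two homology degrees. After a change of basis by self-equivalences of the wedge, the Whitehead part of $f$ becomes $\sum_{i}[\iota_{a_{i}},\iota_{b_{i}}]$. This is exactly the attaching map of the top cell of $\#_{i}(S^{a_{i}}\times S^{b_{i}})$.

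\textbf{Main obstacle.} The hard part is eliminating the remaining terms of $f$: the higher Whitehead products, and the components $\pi_{d-1}(S^{a_{i}})$ in the stable range. The higher products vanish for dimensional reasons, given the connectivity from Step 1. For the stable components, I would first show they can be absorbed by reparametrizing the $S^{a_{i}}\times S^{b_{i}}$ factors, using the freedom to change the inclusion of $S^{b_{i}}$ by elements of $\pi_{b_{i}}(S^{a_{i}})$. If that fails, I would show the components vanish directly. Here the idea is that $\mathcal{Z}$ retracts onto polyhedral products over full subcomplexes, and each such retract is a lower-dimensional moment-angle-type complex, so naturality forces the component to be trivial.

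Step 1 contains the only genuine dependence on $k$ and on neighbourliness. Once it is established, Steps 2 and 3 go through identically for all $k\ge 2$.
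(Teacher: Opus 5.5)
Your outline (wedge of spheres for the punctured manifold, then Hilton--Milnor, then cup products for the top-dimensional length-two terms) matches the paper's. The genuine gap is the step you dismiss: the claim that the higher Hilton--Milnor terms of $f$ vanish ``for dimensional reasons'' is false.

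The summands of $\mathcal{Z}_0$ are spheres $S^{a}$ with $a=1+(k-1)|I|+n$ for $n+2\le |I|\le m-n-2$. So the connectivity is bounded in terms of $n$ and $k$, while $d=(k-1)m+2n+2$ grows with $m$. For instance, take $k=2$ and $n=1$. Every missing triangle of a neighbourly $3$-sphere gives a summand $S^5$; there are $50$ of them when $m=10$. A basic product $[\nu_1,[\nu_1,\nu_2]]$ of two such spheres has source $S^{13}$. Since $d-1=m+3$, for $m=10$ this is a top-dimensional term whose integer coefficient is invisible to cup products. For $m=11$ it can occur precomposed with $\eta$. The same issue affects length-two products $[\iota_{a_i},\iota_{a_j}]$ with $a_i+a_j<d$ precomposed with elements of $\pi_{d-1}(S^{a_i+a_j-1})$, which your Step~2 does not address.

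The paper's Case~3 supplies the two missing ingredients.
\begin{itemize}
\item \emph{Rational control.} The cohomology ring isomorphism and Theorem~\ref{Stasheff} give $\mathcal{Z}\simeq_{\mathbb{Q}}M$. This rules out every rationally nontrivial component other than top-dimensional length-two products, in particular top-dimensional products of length $\ge 3$ and Hopf-invariant-type composites.
\item \emph{A vertex count for torsion components.} Each $S^{a}$ comes from a set $I$ with $(k-1)|I|=a-n-1$. So a Whitehead product of $S^{a_1},\dots,S^{a_r}$ ($r\ge 2$) with source dimension $<d-1$ is supported on at most $\big(\sum a_i-r(n+1)\big)/(k-1)<m$ vertices, hence on a proper full subcomplex $K_J$. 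Since $(D^k,S^{k-1})^{K_J}$ retracts off $\mathcal{Z}$ itself, not merely off $\mathcal{Z}_0$, that component of $f$ must vanish.
\end{itemize}
Your fallback for the single-sphere components, retraction onto full subcomplexes, is the right mechanism; it is Lemma~\ref{fscnull}. But it reaches the Whitehead-product terms only together with this support estimate. Your first idea there, reparametrizing a wedge inclusion by a composite $\iota_{a_i}\circ\gamma$, changes $f$ only by Whitehead products and cannot absorb a stably essential component.

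Step~1 is also not established by either route you offer.
\begin{itemize}
\item The polyhedral product over $K$ minus a facet is not the punctured manifold. For $K=\partial\Delta^2$ and $k=2$ it is $D^2\times S^3\simeq S^3$, whereas $S^5$ minus a disc is contractible.
\item Free homology in a connectivity range does not force a wedge of spheres. The summands are spread over a range of degrees of length $(k-1)(m-2n-4)$, so homologically invisible attaching maps such as $\eta$ between different levels are not excluded. Moreover, ``dimension less than three times the connectivity'' fails once $m$ is large.
\end{itemize}
Two smaller corrections: Alexander duality applied with neighbourliness of both $K_I$ and $K_{[m]-I}$ shows $\tilde H_*(K_I)$ is concentrated in degree $n$ alone, and the BBCG exponent is $2+(k-1)|I|$, not $2+(k-2)|I|$.

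The paper gets the wedge decomposition of the $(d-1)$-skeleton from a substantial imported result: the desuspension of the BBCG decomposition of $(D^k,S^{k-1})^{K\backslash i}$ for every $i$ (Theorem~\ref{ZK/i} and its generalized version). These are assembled into a homology isomorphism as in Proposition~\ref{partialdesuspend}. You need to supply or cite an argument of that strength.
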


\section{Moment-angle complexes associated to neighbourly triangulations} 
\label{sec:neighbourly} 

This section discusses properties of $\zk$ when $K$ is a neighbourly triangulation. Some 
statements are known while others are new. 

There is a general suspension splitting of moment-angle complexes due 
to Bahri, Bendersky, Cohen and Gitler~\cite[Corollary 2.23]{BBCG1}, known as the BBCG decomposition. 
For a simplicial complex $K$ on the vertex set $[m]$, if $I\subseteq [m]$ then 
the \emph{full subcomplex} $K_{I}$ of~$K$ is the simplicial complex consisting of those 
simplices in $K$ whose vertices are all in $I$. 

\begin{theorem}
    \label{BBCGsplit}
    Let $K$ be a simplicial complex. There is a homotopy equivalence 
    \[\Sigma \zk \simeq \bigvee\limits_{I \notin K}\Sigma^{2+|I|}|K_I|\] 
    that is natural for inclusions of simplicial complexes. $\qqed$
\end{theorem}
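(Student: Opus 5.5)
We need to prove the BBCG splitting: $\Sigma\zk\simeq\bigvee_{I\notin K}\Sigma^{2+|I|}|K_I|$, natural for inclusions of simplicial complexes. The plan is to deduce it from the general suspension splitting of polyhedral products, and then identify the wedge summands.

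\textbf{Step 1: reduce to full subcomplexes.} For any pointed pairs, the product $\prod X_i$ stably splits after one suspension as $\bigvee_{I\subseteq[m]}\Sigma\widehat{X}^{I}$, where $\widehat{X}^{I}$ is the smash product of the $X_i$ with $i\in I$. Here I use the standard fact that $\Sigma(A\times B)\simeq\Sigma A\vee\Sigma B\vee\Sigma(A\wedge B)$, iterated and made functorial via the James/Whitehead maps. This splitting is natural in the spaces and compatible with subspaces. Restricting it to the union $\uxa^{K}$ gives
\[\Sigma\uxa^{K}\simeq\bigvee_{I\subseteq[m]}\Sigma\,\widehat{\uxa}^{K_I},\]
where $\widehat{\uxa}^{K_I}$ is the polyhedral smash product, i.e. the image of $\uxa^{K_I}$ in $\widehat{X}^{I}$. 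The compatibility can be checked on each piece $\uxa^{\sigma}$: both sides are colimits over the face poset of $K$ of diagrams that split compatibly, and the splitting maps commute with the coordinate inclusions. Because the construction is functorial in $K$, naturality for inclusions $L\subseteq K$ comes for free.

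\textbf{Step 2: identify the smash pieces.} Specialize to $(D^2,S^1)$. The claim is that $\widehat{(D^2,S^1)}^{K_I}$ is contractible if $I\in K$, and is homotopy equivalent to $\Sigma^{1+|I|}|K_I|$, i.e. $\Sigma^{|I|+1}|K_I|$, otherwise. To see this, write $D^2=\mathrm{C}S^1$, so that $D^2\wedge\cdots$ becomes a join. The polyhedral smash product of $(\mathrm{C}A_i,A_i)$ over $L=K_I$ is then homotopy equivalent to $\Sigma|L|\wedge\widehat{A}^{I}$, by the Bahri--Bendersky--Cohen--Gitler homotopy colimit argument: express it as a colimit over the face poset of $L$ and note that each diagram entry is a smash of cones and $A$'s. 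If $I\in K$, then $L$ is a simplex and the full product of cones is contractible. With $A_i=S^1$ we get $\widehat{A}^{I}=S^{|I|}$, so the summand is $\Sigma^{|I|+1}|K_I|$. After the outer suspension this becomes $\Sigma^{2+|I|}|K_I|$, as claimed.

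\textbf{Main obstacle.} The hard part is Step 2's identification $\widehat{(\mathrm{C}A,A)}^{L}\simeq\Sigma|L|\wedge\widehat{A}^{I}$. I would prove it by viewing the polyhedral smash product as the homotopy colimit over the face category of $L$ (using cofibrancy of CW pairs, so that colimits are homotopy colimits). Then I would replace each $\mathrm{C}A_i$ by a point up to homotopy; this turns the diagram into one constant at $\widehat{A}^{I}$ on faces, modulo a basepoint subtlety for the empty face. The hocolim of a diagram constant at $\widehat{A}^{I}$ on a cone-shaped poset with the empty face gives $\Sigma|L|\wedge\widehat{A}^{I}$ via the projection lemma. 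Naturality again follows from functoriality of homotopy colimits under poset inclusions.
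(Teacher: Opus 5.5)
The paper gives no proof of this statement; it is quoted from Bahri--Bendersky--Cohen--Gitler. Your outline follows their argument: restrict the natural splitting of $\Sigma\prod X_i$ to the polyhedral product, then identify the polyhedral smash products of cone pairs. Step~1 is fine as sketched. The gap is in the mechanism you give for Step~2, which is the step you yourself single out, and as written it fails.

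After replacing each $\mathrm{C}A_i$ by a point, the diagram $\sigma\mapsto\bigwedge_{i\in\sigma}\mathrm{C}A_i\wedge\bigwedge_{i\in I-\sigma}A_i$ is \emph{not} constant at $\widehat{A}^{I}$. For every nonempty face $\sigma$ of $L=K_I$ the value has a contractible smash factor, so it becomes a point. Only the empty face keeps the value $\widehat{A}^{I}$.

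Your proposed computation would also give the wrong answer even if the diagram were constant. The face poset of $L$ including $\emptyset$ has an initial element, so its nerve is contractible. Hence the homotopy colimit of a constant diagram $\widehat{A}^{I}$ over it is just $\widehat{A}^{I}$, and no suspension of $|L|$ appears. The projection lemma only lets you replace the colimit by the homotopy colimit, since the diagram consists of cofibrations. It does not by itself produce $\Sigma|L|\wedge\widehat{A}^{I}$.

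The correct computation runs the other way round. Write the face poset as $\{\emptyset\}$ placed below the poset $P'$ of nonempty faces, whose nerve is the barycentric subdivision of $|L|$. For any diagram $D$ on such a poset, $\mathrm{hocolim}\,D$ is the homotopy pushout of $D(\emptyset)\leftarrow D(\emptyset)\times|P'|\rightarrow\mathrm{hocolim}_{P'}D$.

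Now let $D$ be the diagram with $D(\emptyset)=\widehat{A}^{I}$ and $D(\sigma)=\ast$ for $\sigma\neq\emptyset$. The evident map from the polyhedral smash product diagram to $D$ is a levelwise homotopy equivalence. So the homotopy colimit is the homotopy pushout of $\widehat{A}^{I}\leftarrow\widehat{A}^{I}\times|L|\rightarrow|L|$. That pushout is the join $|L|\ast\widehat{A}^{I}\simeq\Sigma|L|\wedge\widehat{A}^{I}$.

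Equivalently, you can apply the wedge lemma of Welker--Ziegler--\v{Z}ivaljevi\'c, in which every summand except the one at $\emptyset$ is a cone. The construction is functorial in $L$, which gives the naturality you need. With Step~2 replaced by this argument, your outline is a complete proof.
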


The simplicial complexes of interest are triangulations of spheres. Let $K$ be a 
triangulation of $S^{n}$ on $m$ vertices. Then by~\cite[Theorem 4.1.4]{BP}, 
$\zk$ is a $2$-connected manifold of dimension $n+m+1$. Since $\zk$ is simply-connected 
it is orientable and therefore satisfies Poincar\'{e} duality, implying that 
$H_{n+m+1}(\zk)\cong\mathbb{Z}$. Simple-connectivity then implies that $\zk$ can be 
given a $CW$-structure with a single cell in dimension $n+m+1$. 
Let $\overline{\zk}$ be the $(n+m)$-skeleton of $\zk$. There is a homotopy cofibration 
\[\nameddright{S^{n+m}}{f}{\overline{\zk}}{}{\zk}\] 
where $f$ attaches the $(n+m+1)$-cell to $\zk$. The BBCG decomposition of $\Sigma\zk$ 
leads to a related decomposition of $\Sigma\overline{\zk}$. 

\begin{lemma} 
   \label{BBCGbarzk} 
   Let $K$ be a triangulation of $S^{n}$ on $m$ vertices. Then there is a 
   homotopy equivalence  
   \[\Sigma\overline{\zk}\simeq \bigvee\limits_{I \notin K, I\neq [m]}\Sigma^{2+|I|}|K_I|.\] 
\end{lemma}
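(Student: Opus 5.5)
The plan is to suspend the cofibration $\nameddright{S^{n+m}}{f}{\overline{\zk}}{}{\zk}$, compare it with the BBCG decomposition of Theorem~\ref{BBCGsplit}, and split off the top cell. Since $K$ is a triangulation of $S^{n}$, we have $[m]\notin K$ and $K_{[m]}=K$. So the summand of Theorem~\ref{BBCGsplit} indexed by $I=[m]$ is $\Sigma^{2+m}|K|\simeq\Sigma^{2+m}S^{n}=S^{n+m+2}$, which is a single sphere in the top dimension of $\Sigma\zk$.

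First, I will show that $\Sigma f$ is null homotopic. Suspending gives a homotopy cofibration
\[\nameddright{S^{n+m+1}}{\Sigma f}{\Sigma\overline{\zk}}{}{\Sigma\zk}.\]
The equivalence $\Sigma\zk\simeq W\vee S^{n+m+2}$, where $W=\bigvee_{I\notin K,I\neq[m]}\Sigma^{2+|I|}|K_I|$, gives a projection $p\colon\Sigma\zk\to S^{n+m+2}$ onto the top sphere. By Poincar\'{e} duality $H_{n+m+1}(\zk)\cong\mathbb{Z}$ is generated by the top cell, and the only summand contributing to $H_{n+m+2}(\Sigma\zk)$ is $S^{n+m+2}$. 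For every other $I$, $|K_I|$ is a proper full subcomplex of a sphere, so its reduced homology vanishes in degree $n$ and in degree $|I|-1\ge n$ unless $I=[m]$. Hence $p$ is degree one on $H_{n+m+2}$. The composite of the pinch map $\Sigma\zk\to S^{n+m+2}$ from the cofibration with the inclusion of the top sphere is then a degree-one self-map of $S^{n+m+2}$, up to sign. It follows that the cofibration sequence $\Sigma\overline{\zk}\to\Sigma\zk\to S^{n+m+2}$ has a right homotopy inverse. Because we are working with suspensions, a co-H-space argument then gives $\Sigma\zk\simeq\Sigma\overline{\zk}\vee S^{n+m+2}$. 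This is equivalent to $\Sigma f$ being null homotopic.

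Next, I will identify $\Sigma\overline{\zk}$ with $W$. Consider the composite
\[\Sigma\overline{\zk}\longrightarrow\Sigma\zk\simeq W\vee S^{n+m+2}\longrightarrow W.\]
In homology, $H_*(\Sigma\overline{\zk})\to H_*(\Sigma\zk)$ is an isomorphism except in degree $n+m+2$, where $H_{n+m+2}(\Sigma\overline{\zk})=0$ because $\overline{\zk}$ is the $(n+m)$-skeleton and has no top cell. The projection onto $W$ kills exactly the top class. Therefore the composite is a homology isomorphism. Both spaces are simply connected, since $\zk$ is $2$-connected and $\overline{\zk}$ has the same low skeleta. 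By Whitehead's theorem the composite is a homotopy equivalence, which proves the lemma.

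The main obstacle is the bookkeeping in the homology argument. It has to be checked carefully that $H_{n+m+1}(\overline{\zk})$ vanishes, rather than there being extra homology from $f$ having nonzero Hurewicz image. This follows because $H_{n+m+1}(\zk)\cong\mathbb{Z}$ comes entirely from the top cell while $\overline{\zk}$ has no $(n+m+1)$-cells. One also needs the map $H_{n+m}(\overline{\zk})\to H_{n+m}(\zk)$ to be an isomorphism, which requires $f$ to be trivial in homology. That holds because the top cell generates a free summand $\mathbb{Z}$ in $H_{n+m+1}(\zk)$, forcing the cellular boundary of the top cell to be zero. With these checked, the Whitehead argument applies directly.
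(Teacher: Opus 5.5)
Your proof is correct and is essentially the paper's argument. You identify the $I=[m]$ summand $S^{n+m+2}$ of the BBCG splitting as the top cell and discard it, and you make this rigorous by showing that $\Sigma\overline{\zk}\to\Sigma\zk\simeq W\vee S^{n+m+2}\to W$ is a homology isomorphism. That uses two facts: the top cell has zero cellular boundary, and $H_{n+m+2}(W)=0$ because each $\Sigma^{2+|I|}|K_I|$ with $|I|\leq m-1$ has dimension at most $n+m+1$. This dimension count is a cleaner reason than the one you wrote. Your first step showing $\Sigma f\simeq\ast$ is never used and can be dropped.
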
 

\begin{proof} 
By Theorem~\ref{BBCGsplit}, $\Sigma \zk \simeq \bigvee\limits_{I \notin K}\Sigma^{2+|I|}|K_I|$. 
Observe that $I=[m]$ is not a face of $K$; for if it is then $K=\Delta^{m-1}$, implying that 
$\vert K\vert$ is contractible, and therefore $K$ is not a triangulation of~$S^{n}$. With $I=[m]$ 
we have $K_{[m]}=K$, so $\vert K_{[m]}\vert=\vert K\vert\simeq S^{n}$. Thus 
$\Sigma^{2+\vert [m]\vert}\vert K_{[m]}\vert\simeq S^{n+m+2}$. This implies that 
the $I=[m]$ summand accounts precisely for the $(n+m+2)$-cell of $\Sigma\zk$. Removing 
the $I=[m]$ summand removes the $(n+m+2)$-cell, giving 
$\Sigma\overline{\zk}\simeq \bigvee\limits_{I \notin K, I\neq [m]}\Sigma^{2+|I|}|K_I|$. 
\end{proof} 

The BBCG decomposition for $\Sigma \zk$ ``desuspends" if there is a homotopy equivalence 
\[\zk \simeq \bigvee\limits_{I \notin K}\Sigma^{1+|I|}|K_I|.\] 
The relevant instance of this in our case is from~\cite[Theorem 6.2]{ST}, which is 
stated in terms of pseudomanifolds of dimension $2n+1$, but we restrict to the special case of 
neighbourly triangulations of $S^{2n+1}$. Let $K\backslash i$ be the full subcomplex of $K$ 
on the vertex set $[m]-\{i\}$. 

\begin{theorem} 
   \label{ZK/i} 
   Let $K$ be a neighbourly triangulation of $S^{2n+1}$ on the vertex set $[m]$. Then the 
   BBCG decomposition for $\Sigma\mathcal{Z}_{K\backslash i}$ desuspends for all $i\in [m]$.~$\qqed$ 
\end{theorem}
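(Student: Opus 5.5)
The approach is to show that for each $i$ every full subcomplex of $K\backslash i$ is, up to homotopy, a wedge of $n$-spheres. I would then feed this into a desuspension criterion for polyhedral products, namely the Iriye--Kishimoto fat-wedge filtration. Write $L=K\backslash i$, a simplicial complex on $[m]-\{i\}$. Its full subcomplexes are $L_I=K_I$ for $I\subseteq[m]-\{i\}$.

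First I will record the combinatorics. Since $K$ triangulates $S^{2n+1}$ and is neighbourly, every $n+1$ vertices span a simplex. So each full subcomplex $K_I$ contains the full $n$-skeleton of the simplex on $I$, and is therefore $(n-1)$-connected. The missing faces then all have $|I|\geq n+2$.

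Next I use Alexander duality in $\vert K\vert\cong S^{2n+1}$. For $I\neq\emptyset,[m]$, the full subcomplex $K_{[m]-I}$ is a deformation retract of $\vert K\vert-\vert K_I\vert$. This gives
\[\tilde H_j(K_I)\cong\tilde H^{2n-j}(K_{[m]-I}).\]
If $I\subseteq[m]-\{i\}$, then $[m]-I$ is nonempty, so the duality applies. Both sides vanish below degree $n$ by the connectivity above, so $\tilde H_*(K_I)$ is concentrated in degree $n$. It is also torsion free, because $H^{n}$ of an $(n-1)$-connected complex is free and $\tilde H_n(K_I)\cong \tilde H^n(K_{[m]-I})$. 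For $n\geq 2$ the Hurewicz and Whitehead theorems then give $K_I\simeq\bigvee S^n$. The cases $n=0,1$ I will check directly:
\begin{itemize}
\item For $n=0$, $K$ is a polygon and the $K_I$ are disjoint unions of paths.
\item For $n=1$, $K_I$ is connected with free $H_1$ and no $H_2$, hence a homology graph. Here I would argue that $K_I\subseteq S^3$ avoids the vertex $i$ and so embeds in $\mathbb{R}^3$; I would have to check separately that this gives a genuine wedge of circles up to homotopy.
\end{itemize}

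Then I apply the fat-wedge filtration. Iriye and Kishimoto show that $\mathcal{Z}_L$ is built inductively over the full subcomplexes $L_I$. Each stage attaches a cone on $\Sigma^{|I|}\vert L_I\vert$, or more precisely on $\vert L_I\vert\ast(S^1)^{\ast |I|}$, via a map $\varphi_I$ into the previous stage. They also show that the BBCG decomposition desuspends precisely when all $\varphi_I$ are null homotopic. By the previous step the source of $\varphi_I$ is a wedge of spheres of dimension $n+2|I|-1$, so nullity reduces to checking homotopy classes on spheres. I would use two facts. The previous stage is, inductively, a wedge of spheres $\Sigma^{1+|J|}\vert L_J\vert$, of dimensions $n+|J|+1$ with $J\subsetneq I$. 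The map $\varphi_I$ is trivial in homology, because the BBCG splitting shows the cone adds free homology.

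The main obstacle is to kill the higher (Whitehead-product type) components of $\varphi_I$, which homology does not detect. My first idea is to use that $\vert L\vert$ is a homology ball with a cone point removed. Each $L_I$ with $I\subseteq[m]-\{i\}$ sits in $K_{I\cup\{i\}}$, which contains the star of $i$ restricted to $I$. I would try to show $\varphi_I$ factors through the polyhedral product on a cone $K_{I}\cup(\mathrm{st}_K(i)\cap\cdots)$, using naturality of the filtration for inclusions. If that fails, the fallback is a connectivity count: the target is $(2n+2)$-connected and the relevant spheres have dimension at most about twice the connectivity only for small $|I|$. So the count alone will not suffice, and the factorization through a cone is the step I expect to need genuine work.
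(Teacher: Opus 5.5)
The paper gives no proof of this statement. It imports it from \cite[Theorem~6.2]{ST}, so your argument has to stand on its own. Its first half is sound. Neighbourliness makes every $K_I$ $(n-1)$-connected, and Alexander duality against $K_{[m]-I}$ shows that for $I\subseteq[m]-\{i\}$ the reduced homology of $K_I$ is free and concentrated in degree $n$.

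\textbf{The gap.} The step you yourself flag is never carried out: you do not show that the fat-wedge attaching maps are null. In the form you set it up, this is not a formality. For $\mathcal{Z}_L$ itself, the cone for $I$ is attached along a map from $\Sigma^{|I|}|L_I|\simeq\bigvee S^{n+|I|}$. (So both $|L_I|\ast(S^1)^{\ast|I|}$ and the dimension $n+2|I|-1$ are off.) The target is a wedge of spheres $S^{n+1+|J|}$ with $J\subsetneq I$, $J\notin K$, which is only $(2n+2)$-connected. Already for $|I|=n+4$, $\pi_{2n+4}$ of the target can contain $\eta$-classes on $S^{2n+3}$-summands that homology does not see, and for larger $|I|$ Whitehead products appear. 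The proposed factorization through a cone on part of the star of $i$ is not made precise, and connectivity alone fails, as you note. As written, the proof does not close.

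\textbf{How to close it.} Apply Iriye--Kishimoto where their criterion actually lives: the fat-wedge filtration of the \emph{real} moment-angle complex. If every $\varphi_{L_I}\colon |L_I|\to\mathbb{R}\mathcal{Z}^{|I|-1}_{L_I}$ is null homotopic, then the BBCG decomposition of $(C\underline{X},\underline{X})^{L}$ desuspends for every $\underline{X}$, in particular for $(D^2,S^1)$ and $(D^k,S^{k-1})$. Only this sufficiency direction is needed, not the ``precisely when'' you state. At this level nothing is lost to suspension.
\begin{itemize}
\item Induct on $|I|$. Then $Y=\mathbb{R}\mathcal{Z}^{|I|-1}_{L_I}\simeq\bigvee_{J\subsetneq I}\Sigma|L_J|$, where each summand is contractible ($J\in K$) or $n$-connected ($K_J$ is $(n-1)$-connected). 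So $Y$ is $n$-connected.
\item By your duality computation, the source $|L_I|$ has free cohomology concentrated in degree $n$.
\item Every obstruction to a null homotopy lies in $\tilde H^{k}(|L_I|;\pi_kY)$. This vanishes for $k\le n$ because $\pi_kY=0$. It vanishes for $k>n$ because $\tilde H_k(L_I)=0$ and $\tilde H_{k-1}(L_I)$ is free. For $n=0$ the source is homotopy discrete, so there is nothing to check.
\end{itemize}
Hence every $\varphi_{L_I}$ is null, and the filtration is trivial.

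This argument uses only the homology of $|L_I|$. So the case $n=1$ needs no identification $|K_I|\simeq\bigvee S^1$. Your embedding argument could not supply that anyway: a knot exterior is a compact subpolyhedron of $\mathbb{R}^3$ with the homology of a circle but non-free fundamental group.
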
 

Theorem~\ref{ZK/i} is used to show that in the neighbourly case, the decomposition of 
$\Sigma\overline{\zk}$ in Lemma~\ref{BBCGbarzk} desuspends. The proof is a slightly modified 
version of~\cite[Proposition 6.5]{ST}; the details are included in full as they will be adapted later 
to the case of generalized moment-angle manifolds. 

\begin{proposition} 
   \label{partialdesuspend} 
   Let $K$ be a neighbourly triangulation of $S^{2n+1}$ on the vertex set $[m]$. Then there 
   is a homotopy equivalence  
   \[\overline{\zk}\simeq \bigvee\limits_{I \notin K, I\neq [m]}\Sigma^{1+|I|}|K_I|.\] 
   Further, for any $i\in [m]$ the map 
   \(\namedright{\mathcal{Z}_{K\backslash i}}{}{\zk}\) 
   induced by the inclusion 
   \(\namedright{K\backslash i}{}{K}\) 
   factors as a composite 
   \(\nameddright{\mathcal{Z}_{K\backslash i}}{}{\overline{\zk}}{}{\zk}\). 
\end{proposition}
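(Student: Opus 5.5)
We prove the factorization first and then assemble the desuspension from the maps $\mathcal{Z}_{K\backslash i}\to\overline{\zk}$.

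\emph{Step 1: the factorization.} Since $K$ is a triangulation of $S^{2n+1}$, the manifold $\zk$ has dimension $(2n+1)+m+1=2n+m+2$. The complex $\mathcal{Z}_{K\backslash i}$ is a subcomplex of $\zk$, and its dimension is at most the dimension of $(D^2,S^1)^{\Delta}$ over the facets of $K\backslash i$ on $m-1$ coordinates. A facet of $K\backslash i$ has at most $2n+2$ vertices, so this dimension is at most $2(2n+2)+(m-1-(2n+2))=2n+m+1$, one less than $\dim\zk$. By cellular approximation, the inclusion $\mathcal{Z}_{K\backslash i}\to\zk$ is homotopic to a map into the $(2n+m+1)$-skeleton $\overline{\zk}$. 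One can also avoid cellular approximation: $\mathcal{Z}_{K\backslash i}$ is the complement of a top cell, since it misses the interior of the product cell $D^2$ in coordinate $i$. Either way we get the factorization $\mathcal{Z}_{K\backslash i}\to\overline{\zk}\to\zk$. We should make this map explicit enough that its suspension is compatible with the BBCG decompositions.

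\emph{Step 2: the wedge decomposition of $\overline{\zk}$.} Every $I\notin K$ with $I\neq[m]$ misses some vertex $i$, so $I\subseteq[m]-\{i\}$ and $I\notin K\backslash i$. By Theorem~\ref{ZK/i}, $\mathcal{Z}_{K\backslash i}\simeq\bigvee_{I\notin K\backslash i}\Sigma^{1+|I|}|K_I|$, so each such summand $\Sigma^{1+|I|}|K_I|$ admits a map into $\mathcal{Z}_{K\backslash i}$. Choose one such $i$ for each $I$, compose with the map $\mathcal{Z}_{K\backslash i}\to\overline{\zk}$ from Step 1, and take the wedge sum of these maps. This gives
\[e\colon\bigvee_{I\notin K,\,I\neq[m]}\Sigma^{1+|I|}|K_I|\longrightarrow\overline{\zk}.\]

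\emph{Step 3: $e$ is a homotopy equivalence.} By naturality of the BBCG decomposition (Theorem~\ref{BBCGsplit}) for the inclusion $K\backslash i\to K$, $\Sigma$ of the composite $\mathcal{Z}_{K\backslash i}\to\zk$ sends each summand indexed by $I$ to the summand indexed by the same $I$ by the identity. Using Lemma~\ref{BBCGbarzk}, $\Sigma e$ therefore induces an isomorphism on homology. The key check is compatibility: suspending the factorization through $\overline{\zk}$ must agree with the inclusion of summands in Lemma~\ref{BBCGbarzk}. This holds in homology because $H_*(\overline{\zk})\to H_*(\zk)$ is injective; it is an isomorphism below the top degree, and $\overline{\zk}$ has no top-dimensional homology.

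Since $\overline{\zk}$ is simply connected (it is $2$-connected, because $\zk$ is), and the wedge is also simply connected, $e$ itself is a homology isomorphism between simply connected CW-complexes. By Whitehead's theorem $e$ is a homotopy equivalence.

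The main obstacle is Step 3: verifying that the homology map is an isomorphism onto each summand, which requires naturality of the BBCG splitting and injectivity of $H_*(\overline{\zk})\to H_*(\zk)$.
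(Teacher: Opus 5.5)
Your proposal is correct and follows essentially the paper's argument: desuspend $\mathcal{Z}_{K\backslash i}$ by Theorem~\ref{ZK/i}, map one copy of each $\Sigma^{1+|I|}|K_I|$ into $\overline{\zk}$ through some $\mathcal{Z}_{K\backslash i}$ with $i\notin I$, compare with Lemma~\ref{BBCGbarzk} in homology, and apply Whitehead's theorem. Your dimension count (every cell of $\mathcal{Z}_{K\backslash i}$ has dimension at most $2n+m+1$) together with cellular approximation justifies the factorization through $\overline{\zk}$ more cleanly than the paper, which infers the unsuspended factorization from the factorization of $\Sigma\iota$; like the paper, you still implicitly need the desuspension from Theorem~\ref{ZK/i} to agree with the BBCG summands in homology.
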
 

\begin{proof} 
Let $i\in [m]$ and consider the full subcomplex $K\backslash i$ of $K$. The simplicial inclusion 
\(\namedright{K\backslash i}{}{K}\) 
induces a map of moment-angle complexes 
\(\namedright{\mathcal{Z}_{K\backslash i}}{\iota}{\zk}\). 
Since the full subcomplexes of $K\backslash i$ are those for $K$ but without the vertex $i$, 
by Theorem~\ref{BBCGsplit} there are homotopy equivalences 
\[\Sigma\mathcal{Z}_{K\backslash i}\simeq\bigvee\limits_{I \notin K, i\notin I}\Sigma^{2+|I|}|K_I|\qquad 
   \Sigma\zk\simeq\bigvee\limits_{I \notin K}\Sigma^{2+|I|}|K_I|\] 
and the naturality statement in Theorem~\ref{BBCGsplit} implies that $\Sigma\iota$ is 
homotopic to an inclusion of wedge summands. In particular, the condition $i\notin I$ 
implies that $I=[m]$ is not one of the indices in the wedge decomposition for 
$\Sigma\mathcal{Z}_{K\backslash i}$, so Lemma~\ref{BBCGbarzk} implies that $\Sigma\iota$ 
factors as a composite 
\(\nameddright{\Sigma\mathcal{Z}_{K\backslash i}}{}{\Sigma\overline{\zk}}{}{\Sigma\zk}\). 
By Theorem~\ref{ZK/i}, the BBCG decomposition 
for $\mathcal{Z}_{K\backslash i}$ desuspends, giving a composite  
\[\nameddright{\bigvee\limits_{I \notin K, i\notin I}\Sigma^{1+|I|}|K_I|}{}{\overline{\zk}}{}{\zk}\] 
that induces an inclusion in homology. 
As this is true for each $i\in [m]$, taking the wedge sum over all $i\in [m]$ gives a map 
\[\namedright{\bigvee_{i=1}^{m}\bigg(\bigvee\limits_{I \notin K, i\notin I}\Sigma^{1+|I|}|K_I|\bigg)}{} 
    {\overline{\zk}}.\] 
Observe that this map induces an epimorphism in homology since its suspension accounts for all 
the summands in the decomposition of $\Sigma\overline{\zk}$. Note that the index set on the left 
may include duplicate copies of the same wedge summands. Restricting to a single copy for each 
instance of $I\notin K$, $I\neq [m]$ gives a map 
\[g\colon\namedright{\bigvee\limits_{I \notin K, i\notin I}\Sigma^{1+|I|}|K_I|}{}{\overline{\zk}}\] 
whose suspension induces the inclusion of the wedge summands in the decomposition of 
$\Sigma\overline{\zk}$ in Lemma~\ref{BBCGbarzk}. Thus $g$ induces an isomorphism in 
homology and so is a homotopy equivalence by Whitehead's Theorem. 
\end{proof} 

More is true, as proved in~\cite[Theorem 6.6]{ST}. 

\begin{theorem} 
   \label{neighbourlyinW} 
   Let $K$ be a neighbourly triangulation of $S^{2n+1}$ on the vertex set $[m]$. 
   Then each $\vert K_{I}\vert$ for $I\notin K$, $I\neq [m]$ in the homotopy equivalence for 
   $\overline{\zk}$ in Proposition~\ref{partialdesuspend} is either contractible or homotopy 
   equivalent to $\bigvee_{i=1}^{t_{I}} S^{n}$ for some finite positive integer $t_{I}$.~$\qqed$ 
\end{theorem}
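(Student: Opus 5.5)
Fix $I\notin K$ with $I\neq[m]$, and let $J=[m]\setminus I$, which is nonempty. I would first control the homology of $\vert K_{I}\vert$ using Alexander duality on $\vert K\vert\cong S^{2n+1}$. Then I would use neighbourliness to get connectivity, and finally convert the homology computation into a homotopy type.

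\emph{Duality.} The full subcomplex $K_{I}$ is a deformation retract of $\vert K\vert\setminus\vert K_{J}\vert$. Alexander duality in $S^{2n+1}$ then gives
\[\tilde H_{q}(\vert K_{I}\vert)\cong\tilde H^{2n-q}(\vert K_{J}\vert).\]

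\emph{Connectivity.} Since $K$ is $n$-neighbourly, every full subcomplex $K_{L}$ contains the full $(n-1)$-skeleton of the simplex on $L$. Hence $K_{L}$ is $(n-2)$-connected, and $\tilde H_{q}(\vert K_{L}\vert)=0$ for $q\leq n-2$. I apply this to both $L=I$ and $L=J$. For $I$, reduced homology vanishes for $q\leq n-2$. For $J$, reduced cohomology also vanishes in degrees $\leq n-2$, by universal coefficients, since $\tilde H_{n-2}(\vert K_{J}\vert)=0$ and so there is no Ext contribution in degree $n-1$. Feeding the $J$ statement through the duality isomorphism kills $\tilde H_{q}(\vert K_{I}\vert)$ for $2n-q\leq n-2$, that is, for $q\geq n+2$.

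\emph{The degree $n+1$ and degree $n$ groups.} Next I need $\tilde H_{n+1}(\vert K_{I}\vert)=0$. By duality this is $\tilde H^{n-1}(\vert K_{J}\vert)$. This is $\operatorname{Hom}(H_{n-1},\mathbb Z)$ plus an Ext term on $H_{n-2}=0$, so it is a free group. On the other side, $K_{I}$ has dimension at most $2n+1$, but more to the point I would argue that $H_{n+1}(\vert K_{I}\vert)$ is forced to vanish by a symmetric argument. If $\tilde H_{n+1}(\vert K_{I}\vert)\neq 0$, then duality gives $\tilde H^{n-1}(\vert K_{J}\vert)\neq 0$. By universal coefficients, which is free here, this means $\tilde H_{n-1}(\vert K_{J}\vert)$ has a free summand. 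I would rule this out using that $K_{J}$ contains the complete $(n-1)$-skeleton on $J$ together with the $n$-faces. Alternatively, and more robustly, I would quote the torsion-freeness and concentration result from \cite{ST}, or compute via the Hochster formula and Poincar\'e duality of $\zk$. That computation pairs $\tilde H_{q}(K_{I})$ with $\tilde H_{2n-q}(K_{J})$, and the connectivity bound applied to both sides forces concentration in the middle degree. This pairing step is the main obstacle: the degree $n$ and $n+1$ groups must be handled simultaneously. The cleanest route I see is to note that $\tilde H_{n}(\vert K_{I}\vert)\cong\tilde H^{n}(\vert K_{J}\vert)$ and $\tilde H_{n+1}(\vert K_{I}\vert)\cong\tilde H^{n-1}(\vert K_{J}\vert)$. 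From there I would use the symmetric roles of $I$ and $J$, plus a careful universal coefficient bookkeeping, to show that $\tilde H_{n-1}(\vert K_{J}\vert)=0$.

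\emph{Homotopy type.} Once homology is shown to be concentrated in degree $n$ and free, the homotopy type follows. For $n\geq 2$, $\vert K_{I}\vert$ is simply connected because it contains the full $2$-skeleton when $n\geq 2$. A simply connected complex with free homology concentrated in a single degree $n$ is homotopy equivalent to a Moore space, which here is a finite wedge of $S^{n}$, by Hurewicz and Whitehead; it is contractible if that homology is zero. The low cases need separate arguments. For $n=0$, $\vert K_{I}\vert$ is a finite set of points. For $n=1$, it is a connected graph, hence homotopy equivalent to a wedge of circles. Finiteness of $t_{I}$ is automatic since $K$ is finite.
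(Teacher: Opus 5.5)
The paper gives no proof of this statement; it quotes it from \cite[Theorem 6.6]{ST}, so your argument has to stand on its own. Its overall shape is right: Alexander duality in $\vert K\vert\cong S^{2n+1}$, connectivity of full subcomplexes, then Hurewicz and Whitehead. As written, though, there is a genuine gap, caused by an off-by-one in how you use neighbourliness. In the paper's convention a neighbourly triangulation of $S^{2n+1}$ is $n$-neighbourly: every $n+1$ vertices span a simplex. So $K$ contains the full $n$-skeleton of the simplex on $[m]$, and every full subcomplex $K_{L}$ is $(n-1)$-connected, not merely $(n-2)$-connected.

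Working only with the $(n-1)$-skeleton leaves you with $\tilde H_{n-1}$ and $\tilde H_{n+1}$, and your paragraph on them is not a proof. ``Symmetric roles plus universal coefficient bookkeeping'' cannot succeed: the constraints you list (duality plus $(n-2)$-connectivity of both sides) are all satisfied by $S^{n-1}\subset S^{2n+1}$, whose complement is $\simeq S^{n+1}$. Quoting \cite{ST} is circular. Your aside that $K_{J}$ contains ``the $n$-faces'' is exactly the missing input. With $(n-1)$-connectivity of both $K_{I}$ and $K_{J}$ you get:
\begin{itemize}
\item $\tilde H_{q}(\vert K_{I}\vert)=0$ for $q\leq n-1$;
\item $\tilde H_{q}(\vert K_{I}\vert)\cong\tilde H^{2n-q}(\vert K_{J}\vert)=0$ for $q\geq n+1$;
\item $\tilde H_{n}(\vert K_{I}\vert)\cong\operatorname{Hom}(\tilde H_{n}(\vert K_{J}\vert),\mathbb{Z})$, which is free of finite rank.
\end{itemize}
No obstacle is left. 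This also repairs your simple-connectivity claim for $n=2$, which your stated $(n-1)$-skeleton bound would not give.

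Your treatment of $n=1$ is wrong as written. $K$ triangulates $S^{3}$, so $K_{I}$ can contain $2$- and $3$-simplices and is not a graph. Knowing that $\vert K_{I}\vert$ is connected with $H_{1}$ free and $H_{2}=H_{3}=0$ does not make it a wedge of circles, because nothing controls $\pi_{1}$. For instance, if some $K_{J}$ were a knotted $3$-cycle, then $\vert K_{I}\vert\simeq\vert K\vert\setminus\vert K_{J}\vert$ would be a knot complement, with the homology of $S^{1}$ but not its homotopy type. So $n=1$ needs a separate argument about $\pi_{1}$.

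Alternatively, settle for the suspended form, which is all that is used later in the paper. For every $n\geq 0$, $\Sigma^{1+\vert I\vert}\vert K_{I}\vert$ is simply connected (as $\vert I\vert\geq 1$) with free homology concentrated in one degree. Hence it is contractible or a finite wedge of spheres. For $n\geq 2$ and $n=0$, your argument is complete once the connectivity bound is corrected.
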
 

Proposition~\ref{partialdesuspend} and Theorem~\ref{neighbourlyinW} combine to give a 
homotopy equivalence for $\overline{\zk}$ that relates directly to the combinatorics of $K$. 

\begin{corollary} 
   \label{neighbourlysupport} 
   Let $K$ be a neighbourly triangulation of $S^{2n+1}$ on the vertex set $[m]$. Then 
   $\overline{\zk}\simeq\bigvee_{k=1}^{\ell} S^{n_{k}}$ where each $S^{n_{k}}$ retracts 
   off a $\mathcal{Z}_{K_{I_{k}}}$ for some $I_{k}$ with $\vert I_{k}\vert= n_{k}-n-1$. 
\end{corollary}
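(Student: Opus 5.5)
Combine Proposition~\ref{partialdesuspend} with Theorem~\ref{neighbourlyinW}, and extract the retraction statement from the way the map $g$ was built in the proof of Proposition~\ref{partialdesuspend}.

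\emph{Step 1: the wedge of spheres.} By Proposition~\ref{partialdesuspend}, $\overline{\zk}\simeq\bigvee_{I\notin K,\,I\neq[m]}\Sigma^{1+|I|}|K_I|$. By Theorem~\ref{neighbourlyinW}, each $|K_I|$ with $I\notin K$, $I\neq[m]$ is either contractible or homotopy equivalent to $\bigvee_{j=1}^{t_I}S^n$. The contractible summands contribute nothing. Each remaining $I$ contributes $\bigvee_{j=1}^{t_I}S^{n+1+|I|}$. Enumerate all of these spheres as $S^{n_1},\ldots,S^{n_\ell}$, labelling the $k$-th sphere by the index $I_k$ it came from, so that $n_k=n+1+|I_k|$, that is, $|I_k|=n_k-n-1$.

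\emph{Step 2: the retraction.} Fix $k$ and set $I=I_k$. Since $I\neq[m]$, choose a vertex $i\notin I$. Then $K_I$ is a full subcomplex of $K\backslash i$. Full subcomplex inclusions induce retractions of polyhedral products, so $\mathcal{Z}_{K_I}$ retracts off $\mathcal{Z}_{K\backslash i}$. It is then enough to see that $S^{n_k}$ retracts off $\mathcal{Z}_{K_I}$ itself, compatibly with the equivalence of Step 1.

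For this, apply Theorem~\ref{BBCGsplit} to $K_I$: $\Sigma\mathcal{Z}_{K_I}\simeq\bigvee_{J\subseteq I,\,J\notin K}\Sigma^{2+|J|}|K_J|$. This contains the top summand $\Sigma^{2+|I|}|K_I|$, which is a wedge of copies of $S^{n_k+1}$. Every proper $J\subsetneq I$ is not $[m]$, so by Theorem~\ref{neighbourlyinW} its $|K_J|$ is also contractible or a wedge of $S^n$'s. Hence $\Sigma\mathcal{Z}_{K_I}$ is a wedge of spheres.

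To desuspend, compose $\mathcal{Z}_{K_I}\to\mathcal{Z}_{K\backslash i}\simeq\bigvee_{J\notin K,\,i\notin J}\Sigma^{1+|J|}|K_J|$, using Theorem~\ref{ZK/i}, with the retraction back to $\mathcal{Z}_{K_I}$. The desuspension of $\mathcal{Z}_{K\backslash i}$ is natural in the same sense as the BBCG splitting, and the retraction induces the projection onto the summands indexed by $J\subseteq I$. Together these give $\mathcal{Z}_{K_I}\simeq\bigvee_{J\subseteq I,\,J\notin K}\Sigma^{1+|J|}|K_J|$, which contains $S^{n_k}$ as a wedge summand. So $S^{n_k}$ retracts off $\mathcal{Z}_{K_I}$.

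\emph{Step 3: compatibility.} The map $g$ in the proof of Proposition~\ref{partialdesuspend} restricted to the summand $\Sigma^{1+|I|}|K_I|$ factors through $\mathcal{Z}_{K\backslash i}$. The summand $S^{n_k}$ of $\overline{\zk}$ is therefore the image of the corresponding sphere in $\mathcal{Z}_{K_I}\subseteq\mathcal{Z}_{K\backslash i}$. Consequently the identification in Step 1 is compatible with the retraction in Step 2.

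The main obstacle is Step 2. Producing a sphere in $\mathcal{Z}_{K_I}$ is easy, but making its image match the chosen summand of $\overline{\zk}$ requires a naturality statement that is only available after suspension. I would handle this by working in homology: the sphere maps into $\overline{\zk}$ hitting the generator indexed by $I$. A map from a wedge of spheres that is an isomorphism in homology is an equivalence, by Whitehead's theorem. So one may re-choose the equivalence in Step 1 so that its $I_k$ summands are exactly these images, and the retraction statement then follows.
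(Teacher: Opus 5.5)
Your argument is correct and follows the paper. The paper's proof is exactly your Step~1 (combine Proposition~\ref{partialdesuspend} with Theorem~\ref{neighbourlyinW} and re-index), with the retraction off $\mathcal{Z}_{K_{I_k}}$ simply asserted. Your homology/Whitehead re-choice of the equivalence is the same mechanism the paper uses later, with the maps $a_I$ in Lemma~\ref{ZKIproperties} and Proposition~\ref{ZKequivs}.

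One small point: Theorem~\ref{ZK/i} does not give the naturality of the desuspension of $\mathcal{Z}_{K\backslash i}$ that you invoke in Step~2. You do not need it. $\mathcal{Z}_{K_I}$ is a retract of the simply-connected finite wedge of spheres $\mathcal{Z}_{K\backslash i}$, so its homology is free and spherical, and that is all your homology fix requires.
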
 

\begin{proof} 
By Proposition~\ref{partialdesuspend} there is a homotopy equivalence 
$\overline{\zk}\simeq\bigvee_{I\notin K, I\neq [m]}\Sigma^{1+\vert I\vert}\vert K_{I}\vert$ 
and by Theorem~\ref{neighbourlyinW} each $\vert K_{I}\vert$ is either contractible or 
homotopy equivalent to a wedge $\bigvee_{i=1}^{t_{I}} S^{n}$. Thus, if~$\mathcal{I}$ is 
the subset of those $I\notin K$, $I\neq [m]$ with the property that $\vert K_{I}\vert$ is not 
contractible, then 
\[\overline{\zk}\simeq\bigvee_{I\in\mathcal{I}}\Sigma^{1+\vert I\vert}\big(\bigvee_{i=1}^{t_{I}} S^{n}\big) 
     \simeq\bigvee_{I\in\mathcal{I}}\big(\bigvee_{i=1}^{t_{I}} S^{n+1+\vert I\vert}\big).\] 

Re-indexing to suppress the role of $I$, there is a homotopy equivalence 
$\overline{\zk}\simeq\bigvee_{k=1}^{\ell} S^{n_{k}}$ where each $S^{n_{k}}$ retracts 
off a $\mathcal{Z}_{K_{I_{k}}}$ for some $I_{k}$ with $n_{k}=n+1+\vert I_{k}\vert$. 
\end{proof} 

Finally, a description is given of the integral cohomology of $\zk$ when $K$ is neighbourly. This 
can likely be deduced from a more general result in~\cite[Theorem 4.3]{KP}, while the  
rational cohomology was determined in~\cite[Proof of Theorem 5.5]{AB}. A proof is included because 
we want to transparently generalize it in Section~\ref{sec:gmam} to the case of generalized 
moment-angle complexes. 

\begin{proposition} 
   \label{neighbourlycohlgy} 
   Let $K$ be a neighbourly triangulation of $S^{2n+1}$. Then there is a ring isomorphism 
   $H^{\ast}(\zk)\cong H^{\ast}(M)$, where $M$ is a connected sum of products of two spheres. 
\end{proposition}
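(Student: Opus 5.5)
The plan is to read off the additive structure from Corollary~\ref{neighbourlysupport} and the cell structure of $\zk$, and then pin down the ring structure with Poincar\'e duality and a degree argument. Set $N=2n+2+m$, the dimension of $\zk$ (here $K$ triangulates $S^{2n+1}$). By Corollary~\ref{neighbourlysupport}, $\overline{\zk}\simeq\bigvee_{k=1}^{\ell}S^{n_k}$ with $n_k=2n+2+|I_k|$. The cofibration $S^{N-1}\stackrel{f}{\longrightarrow}\overline{\zk}\longrightarrow\zk$ then gives $\widetilde H^{\ast}(\zk)$ as free abelian, with one generator in degree $n_k$ for each $k$ and a top class in degree $N$. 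Here $f$ is zero in homology because $\zk$ is an orientable closed manifold with $H_N(\zk)\cong\mathbb Z$. Cup products of positive-degree classes land in $H^{\ast}$ and must be computed.

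The first step is to show that a product of two classes of degrees strictly less than $N$ can only be nonzero in degree $N$. A product $x_a x_b$ of generators has degree $n_a+n_b\geq 4n+4+|I_a|+|I_b|$. All nonzero groups in degrees strictly between $0$ and $N$ sit in degrees $2n+2+|I|$ with $|I|\leq m-1$, so $|I|\ge n+2$ by neighbourliness. For such a degree to equal $n_a+n_b$ we would need $|I|=2n+2+|I_a|+|I_b|\ge 4n+6$. One can argue this is impossible, or more cleanly observe it directly: a product landing in degree $2n+2+|I|$ with $I\neq[m]$ restricts to $\mathcal Z_{K_J}$ for suitable proper $J$. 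There the class lies in the image of the retraction, and $\mathcal Z_{K\backslash i}$ is a wedge of spheres by Theorem~\ref{ZK/i} and Theorem~\ref{neighbourlyinW}, so it has trivial cup products.

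Concretely, the restriction $H^{d}(\zk)\to H^{d}(\mathcal Z_{K\backslash i})$ is an isomorphism for the summands with $i\notin I$ (Proposition~\ref{partialdesuspend} and naturality of BBCG). For $d<N$ every $I\neq[m]$ misses some $i$. Hence for each class $z$ in degree $d<N$ there is an $i$ with $z$ detected on $\mathcal Z_{K\backslash i}$. Since $x_ax_b$ restricts to a product in the cohomology of a wedge of spheres, it is zero there. So $x_ax_b$ has zero component in every summand detected this way, which means $x_ax_b=0$ whenever its degree is less than $N$. This is the step I expect to require the most care: checking that the restriction maps jointly detect all of $H^{<N}(\zk)$.

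Finally, Poincar\'e duality with integral coefficients shows that the cup pairing $H^{d}\otimes H^{N-d}\to H^N\cong\mathbb Z$ is perfect, since all the groups are free. The generators of degree $d$ and $N-d$ therefore come in dual bases. Also $d\neq N-d$ always: the degrees $2n+2+|I|$ and $N-(2n+2+|I|)=m-|I|$ have opposite parity conditions, and moreover $m-|I|\le m-n-2<2n+2+|I|$ is false in general. If a middle degree $d=N/2$ occurs, choose a symplectic or hyperbolic basis. When $N/2$ is odd the form is skew and unimodular, hence hyperbolic. When $N/2$ is even I would argue that the middle-degree form is still hyperbolic, using the wedge description of the skeleton and a count of the summands coming from $I$ and its complement $[m]-I$ (Alexander duality pairs $K_I$ with $K_{[m]-I}$). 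Having pairs of dual bases $\{x_j\}$, $\{y_j\}$ with $x_jy_j$ a generator of $H^N$ and all other products zero, we obtain exactly the cohomology ring of $\#_j(S^{d_j}\times S^{N-d_j})$.
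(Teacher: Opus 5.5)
Most of your argument is sound. Restricting to the $\mathcal{Z}_{K\backslash i}$ does detect $H^{<N}(\zk)$ and kills every product below the top degree. (More directly, $H^{d}(\zk)\to H^{d}(\overline{\zk})$ is an isomorphism for $d<N$ and $\overline{\zk}$ is a wedge of spheres. Drop the degree count you try first.) Dual bases for the perfect pairing $H^{d}\otimes H^{N-d}\to\mathbb{Z}$ then settle every $d\neq N/2$, and also $d=N/2$ odd, since a unimodular alternating form has a symplectic basis.

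\textbf{The gap is $d=N/2$ even.} Your claim that this degree never occurs is false, and it rests on a wrong degree formula. Since $\vert K_I\vert\simeq\bigvee S^{n}$, the $I$-summand lies in degree $n+1+\vert I\vert$ (Corollary~\ref{neighbourlysupport}), not $2n+2+\vert I\vert$. The summand for $[m]-I$ lies in the complementary degree $n+1+m-\vert I\vert$. So the middle degree appears whenever some $I\notin K$ with $\vert I\vert=m/2$ has $K_I$ not acyclic. For instance, with $n=1$ and $m=8$, counting the $40$ triangles shows some non-facet $4$-subset spans at most two triangles, which gives classes in $H^{6}$ of the $12$-manifold $\zk$. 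In that case, what you have is a symmetric unimodular form plus an additive splitting of $H^{N/2}$ into $I$- and $([m]-I)$-pieces of equal rank. That does not force hyperbolicity: $\langle 1\rangle\oplus\langle -1\rangle$ has exactly this shape but is odd, so it is not the middle form of any connected sum of products of two spheres. Alexander duality and a count of summands are additive statements and cannot rule this out.

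\textbf{The missing ingredient is multiplicative.} The cup product respects the decomposition by full subcomplexes: a class from the $I$-summand times a class from the $J$-summand is zero unless $I\cap J=\emptyset$, and otherwise lies in the $(I\cup J)$-summand. This is \cite[Theorem 4.5.8]{BP}, which is exactly what the paper invokes. It makes each middle-degree piece $A_I$ isotropic and orthogonal to every $A_J$ with $J\neq[m]-I$. The form is therefore an orthogonal sum of blocks on $A_I\oplus A_{[m]-I}$ with matrix $\left(\begin{smallmatrix}0&P\\ \pm P^{T}&0\end{smallmatrix}\right)$, $P$ unimodular, hence hyperbolic. Choosing a basis of $A_I$ and the dual basis of $A_{[m]-I}$ gives the $S^{N/2}\times S^{N/2}$ factors.

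You could also get this vanishing from your own retraction toolkit. Classes in $A_I$ and $A_J$ are pulled back along the retraction $\zk\to\mathcal{Z}_{K_{I\cup J}}$. When $I\cup J\neq[m]$, this target is a wedge of spheres by Lemma~\ref{ZKIproperties}, so the product vanishes.

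This is why the paper organises the argument by the pairs $(I,[m]-I)$ rather than by degrees $(d,N-d)$: the middle degree then needs no separate treatment. As written, your proposal does not prove the proposition when $N/2$ is even.
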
 

\begin{proof}  
By Proposition~\ref{partialdesuspend} and Theorem~\ref{neighbourlyinW}, 
$\overline{\zk}\simeq \bigvee \limits_{I \notin K,I \neq [m]} \Sigma^{1+|I|}\vert K_I\vert$ 
is homotopy equivalent to a wedge of spheres. This implies two things. First, 
the only possible nonzero cup products occur in $H^{2n+m+2}(\zk)$. By Lemma~\ref{BBCGbarzk}, 
this cohomology group corresponds to the $K_{[m]}$ term in the BBCG decomposition 
of $\Sigma\zk$. On the other hand, in general, by~\cite[Theorem 4.5.8]{BP} nonzero cup products in
$H^{\ast}(\zk)$ occur only in cases of the form 
\(\namedright{H^{k-\vert I\vert +1}(K_{I})\otimes H^{\ell-\vert J\vert -1}(K_{J})}{} 
    {H^{k+\ell-\vert I\vert-\vert J\vert -1}(K_{I\cup J})}\) 
for $I\cap J=\emptyset$. Therefore, in our case, as the only nonzero cup product occurs with 
respect to $K_{[m]}$, we must also have $I\cup J=[m]$, that is, nonzero cup products occur 
only with respect to pairs of full subcomplexes $K_{I}$ and $K_{[m]-I}$. Second, 
$\overline{\zk}$ being homotopy equivalent to a wedge of spheres implies that $H_{\ast}(\zk)$ 
is torsion free. It has already been noted that $\zk$ is a $2$-connected manifold, implying that  
Poincar\'{e} duality holds, and as $H^{\ast}(\zk)$ is torsion free 
the cap and cup products are dual. Therefore the cup product induces 
isomorphisms $H^{t}(\zk)\cong H^{2n+m-t}(\zk)$ for all $0\leq t\leq 2n+m+2$. But as 
the only nonzero cup products occur with respect to pairs of full subcomplexes $K_{I}$ 
and $K_{[m]-I}$, the cup product must induce isomorphisms 
\begin{equation} 
  \label{bigradediso} 
  H^{t}(\Sigma^{1+\vert I\vert}\vert K_{I}\vert)\cong 
    H^{2n+m+2-t}(\Sigma^{1+\vert [m]-I\vert}\vert K_{[m]-I}\vert) 
\end{equation}   
for all $0\leq t\leq 2n+m+2$. 

Fix $I\notin K$, $I\neq [m]$. By Theorem~\ref{neighbourlyinW}, $\Sigma^{1+\vert I\vert}\vert K_{I}\vert$ 
is homotopy equivalent to a finite wedge of spheres of the same dimension, say 
$\Sigma^{1+\vert I\vert}\vert K_{I}\vert\simeq\bigvee_{j=1}^{r} S^{d}$. Theorem~\ref{neighbourlyinW} 
also implies that $\Sigma^{1+\vert [m]-I\vert}\vert K_{[m]-I}\vert$ is homotopy equivalent to a finite 
wedge of spheres, so the isomorphism~(\ref{bigradediso}) implies that 
$\Sigma^{1+\vert [m]-I\vert}\vert K_{[m]-I}\vert\simeq\bigvee_{j=1}^{r} S^{2n+m+2-d}$. 
Let $\{x_{1},\ldots, x_{r}\}$ be 
a basis of $H^{d}(\bigvee_{j=1}^{d} S^{d})$ corresponding to the generators of the spheres 
in the wedge. Then the cup product isomorphism from Poincar\'{e} duality implies that 
$\{\overline{x}_{1},\ldots, \overline{x}_{r}\}$ is a basis of 
$H^{2n+m+2-d}(\bigvee_{j=1}^{r} S^{2n+m+2-d})$, 
where $x_{i}\cup\overline{x}_{j}$ equals the generator of $H^{2n+m+2}(\zk)$ if $i=j$ and equals 
$0$ otherwise. This basis may not correspond to the basis given by the inclusion of the 
wedge summands. In that case, there is a linear isomorphism between bases and this 
can be used to construct a self-equivalence of $\bigvee_{j=1}^{r} S^{2n+m+2-d}$. In this 
way, the wedge $\bigvee_{j=1}^{r} S^{2n+m+2-d}$ can be changed by a homotopy 
equivalence so that the basis $\{\overline{x}_{1},\ldots,\overline{x}_{r}\}$ does correspond 
to the inclusion of the wedge summands. 
This basis has the property that the cup product pairs each wedge summand in 
$\bigvee_{j=1}^{r} S^{d}$ with one wedge summand in $\bigvee_{j=1}^{r} S^{2n+m+2-d}$ so that 
$H^{\ast}(\conn_{j=1}^{r} S^{d}\times S^{2n+m+2-d})$ is a subalgebra of $H^{\ast}(\zk)$. 
This can be done for each pair $I$ and $[m]-I$ where $I\notin K$, $I\neq [m]$. The one 
remaining pair is $[m]$ and $\emptyset$, which corresponds to the $2n+m+2$ and $0$ dimensional 
cohomology classes in $H^{\ast}(\zk)$. Thus, collectively, there is a ring isomorphism 
$H^{\ast}(\zk)\cong H^{\ast}(M)$ where $M$ is a connected sum of products of two spheres. 
\end{proof}

\section{An alternative homotopy equivalence for $\overline{\zk}$}  

By Proposition~\ref{partialdesuspend}, if $K$ is a neighbourly triangulation of $S^{2n+1}$ on $[m]$ 
then there is a homotopy equivalence 
$\overline{Z_{K}}\simeq\bigvee_{I \notin K,I \neq [m]} \Sigma^{1+\vert I\vert}\vert K_I\vert$ 
where each $\vert K_{I}\vert$ is contractible or homotopy equivalent to a finite wedge of spheres. 
It will be helpful in what follows to construct an alternative homotopy equivalence for $\overline{\zk}$ 
that has particular properties. 

Fix an $I\notin K$, $I\neq [m]$. Consider the simplicial inclusion 
\(\namedright{K_{I}}{}{K}\). 
As $I$ is a proper subset of~$[m]$ there is a vertex $i\notin I$, implying that the simplicial 
inclusion 
\(\namedright{K_{I}}{}{K}\) 
factors as a composite of simplicial inclusions 
\(\nameddright{K_{I}}{}{K\backslash i}{}{K}\).  
This composite induces maps of polyhedral products 
\(\nameddright{\mathcal{Z}_{K_{I}}}{}{\mathcal{Z}_{K\backslash i}}{}{\zk}\).  
By Proposition~\ref{partialdesuspend}, the map 
\(\namedright{\mathcal{Z}_{K\backslash i}}{}{\zk}\) 
factors as a composite 
\(\nameddright{\mathcal{Z}_{K\backslash i}}{}{\overline{\zk}}{}{\zk}\). 

In general, if $J\subseteq [m]$ then the full subcomplex $K_{J}$ of $K$ has the property 
that the simplicial inclusion 
\(\namedright{K_{J}}{}{K}\) 
induces a map of polyhedral products 
\(\namedright{\mathcal{Z}_{K_{J}}}{}{\zk}\) 
that, by~\cite{DS}, has a left inverse 
\(\namedright{\zk}{}{\mathcal{Z}_{K_{J}}}\). 
In our case, as $K_{I}$ is a full subcomplex of $K\backslash i$ the map 
\(\namedright{\mathcal{Z}_{K_{I}}}{}{\mathcal{Z}_{K\backslash i}}\) 
has a left inverse 
\(\namedright{\mathcal{Z}_{K\backslash i}}{}{\mathcal{Z}_{K_{I}}}\), 
and as $K\backslash i$ is a full subcomplex of $K$ the map 
\(\namedright{\mathcal{Z}_{K\backslash i}}{}{\zk}\) 
has a left inverse 
\(\namedright{\zk}{}{\mathcal{Z}_{K\backslash i}}\). 
Thus there is a sequence of maps 
\begin{equation} 
  \label{Zstring} 
  \nameddright{\mathcal{Z}_{K_{I}}}{}{\mathcal{Z}_{K\backslash i}}{}{\overline{\zk}} 
    \longrightarrow\nameddright{\zk}{}{\mathcal{Z}_{K\backslash i}}{}{\mathcal{Z}_{K_{I}}} 
\end{equation}  
that equals the identity map. We obtain the following. 

\begin{lemma} 
    \label{ZKIproperties} 
    Let $K$ be a neighbourly triangulation of $S^{2n+1}$ on $[m]$. If $I\notin K$, $I\neq [m]$ then: 
    \begin{itemize} 
       \item[(a)] $\mathcal{Z}_{K_{I}}$ is a retract of $\overline{\zk}$; 
       \item[(b)] $\mathcal{Z}_{K_{I}}$ is contractible or homotopy equivalent to a finite wedge of spheres; 
       \item[(c)] the BBCG decomposition for $\Sigma\mathcal{Z}_{K_{I}}$ desuspends. 
    \end{itemize} 
\end{lemma}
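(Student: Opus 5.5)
Part (a) comes directly from the string of maps~(\ref{Zstring}). Fix $I\notin K$, $I\neq [m]$, and choose a vertex $i\notin I$, which exists because $I$ is proper. Let $a$ be the composite $\nameddright{\mathcal{Z}_{K_{I}}}{}{\mathcal{Z}_{K\backslash i}}{}{\overline{\zk}}$, where the second map is the factorization supplied by Proposition~\ref{partialdesuspend}. Let $b$ be the composite $\overline{\zk}\longrightarrow\zk\longrightarrow\mathcal{Z}_{K\backslash i}\longrightarrow\mathcal{Z}_{K_{I}}$, built from the skeletal inclusion and the two left inverses from~\cite{DS}. The composite $b\circ a$ is the inclusion $\mathcal{Z}_{K_{I}}\to\zk$ followed by its left inverse, so it is the identity, possibly only up to homotopy since the factorization through $\overline{\zk}$ is a homotopy factorization. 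Hence $\mathcal{Z}_{K_{I}}$ is a retract of $\overline{\zk}$.

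For (b) and (c), the plan is to combine (a) with the fact that $\overline{\zk}$ is a wedge of spheres. By Corollary~\ref{neighbourlysupport}, $\overline{\zk}\simeq\bigvee_{k=1}^{\ell}S^{n_{k}}$. A retract of a simply-connected finite wedge of spheres is again homotopy equivalent to a wedge of spheres. To see this, note that $\mathcal{Z}_{K_{I}}$ is simply-connected, being a moment-angle complex. Its homology is a direct summand of a free graded group, so it is free and finitely generated.

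Next I need maps realizing a homology basis. For each basis element in degree $d$, take its image under $a_{\ast}$ in $H_{d}(\overline{\zk})$. By the Hurewicz theorem for a wedge of spheres, or more simply using the wedge summand inclusions, that image is the Hurewicz image of some map $S^{d}\to\overline{\zk}$. Composing with $b$ gives a map $S^{d}\to\mathcal{Z}_{K_{I}}$ whose Hurewicz image is $b_{\ast}a_{\ast}$ of the chosen class, which is the class itself. Wedging these maps together gives a map $\bigvee S^{d_{j}}\to\mathcal{Z}_{K_{I}}$ inducing a homology isomorphism. Whitehead's theorem then makes it a homotopy equivalence, or $\mathcal{Z}_{K_{I}}$ is contractible when its reduced homology vanishes. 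This proves (b).

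For (c), the approach is to compare with the BBCG decomposition. By Theorem~\ref{BBCGsplit}, $\Sigma\mathcal{Z}_{K_{I}}\simeq\bigvee_{J\subseteq I,J\notin K}\Sigma^{2+|J|}|K_{J}|$. Each $J\subseteq I$ with $J\notin K$ satisfies $J\neq[m]$, so Theorem~\ref{neighbourlyinW} applies to each summand. Thus $|K_{J}|$ is contractible or a wedge of copies of $S^{n}$, and $\bigvee_{J}\Sigma^{1+|J|}|K_{J}|$ is a wedge of spheres. Its homology agrees with that of $\mathcal{Z}_{K_{I}}$ after suspension, so the two wedges of spheres have the same number of spheres in each dimension. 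By (b) they are therefore homotopy equivalent, and this equivalence is the desuspension of the BBCG decomposition.

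I expect the main subtlety to be in (c): deciding whether ``desuspends'' requires only an abstract equivalence of this form or compatibility with the BBCG map. An abstract equivalence suffices for the definition stated in the paper. If compatibility is wanted, I would restrict the equivalence $g$ of Proposition~\ref{partialdesuspend} to the summands indexed by $J\subseteq I$ and compose with $b$. The suspension of this composite is the BBCG summand inclusion followed by $\Sigma b$. By naturality of Theorem~\ref{BBCGsplit} for full-subcomplex retractions, $\Sigma b$ projects onto the summands indexed by $J\subseteq I$, so the composite induces a homology isomorphism and is an equivalence by Whitehead.
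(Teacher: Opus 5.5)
Your proposal is correct and follows the paper's proof: (a) comes from the composite~(\ref{Zstring}) being the identity, (b) from $\mathcal{Z}_{K_{I}}$ being a retract of the wedge of spheres $\overline{\zk}$, and (c) from (b) together with the BBCG summands being wedges of spheres. You add detail the paper leaves implicit, namely the Hurewicz--Whitehead argument that a retract of a simply-connected finite wedge of spheres is again a wedge of spheres, and in (c) a direct appeal to Theorem~\ref{neighbourlyinW} for each $\vert K_{J}\vert$ with $J\subseteq I$ (rather than deducing the summands' structure from $\Sigma\mathcal{Z}_{K_{I}}$ being a wedge of spheres), which tightens the argument without changing it.
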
 

\begin{proof} 
Part~(a) immediately follows from the fact that the composite in~(\ref{Zstring}) is the identity map. 
By Theorem~\ref{neighbourlyinW}, $\overline{\zk}$ is contractible or homotopy equivalent to a finite wedge 
of spheres. Any retract of a contractible space is contractible, and any retract of a wedge of spheres is 
homotopy equivalent to a subwedge of those spheres, so part~(b) follows. As $\mathcal{Z}_{K_{I}}$ is 
contractible or homotopy equivalent to a wedge of spheres, its suspension will give a homotopy 
equivalence for $\Sigma\mathcal{Z}_{K_{I}}$ that is contractible or a wedge of spheres, which 
implies that each of the wedge summands in the BBCG decomposition of $\Sigma\mathcal{Z}_{K_{I}}$ 
is contractible or homotopy equivalent to a wedge of spheres. Thus each such wedge summand 
desuspends, implying the BBCG decomposition desuspends, proving part~(c). 
\end{proof} 

The desuspension of the BBCG decomposition for $\Sigma\mathcal{Z}_{K_{I}}$ implies 
by Theorem~\ref{BBCGsplit} that $\Sigma^{1+\vert I\vert}\vert K_{I}\vert$ is a retract 
of $\mathcal{Z}_{K_{I}}$. Thus there is a composite  
\(\nameddright{\Sigma^{1+\vert I\vert}\vert K_{I}\vert}{}{\mathcal{Z}_{K_{I}}} 
    {}{\Sigma^{1+\vert I\vert}\vert K_{I}\vert}\) 
that is homotopic to the identity map. Define $a_{I}$ and $p_{I}$ by the composites 
\[a_{I}\colon\nameddright{\Sigma^{1+\vert I\vert}\vert K_{I}\vert}{}{\mathcal{Z}_{K_{I}}}{}{\overline{\zk}}\]  
\[p_{I}\colon\namedddright{\overline{\zk}}{}{\zk}{}{\mathcal{Z}_{K_{I}}}{}{\Sigma^{1+\vert I\vert}\vert K_{I}\vert}\] 
where the right map in the composite for $a_{I}$ and the left and middle maps in the 
composite for $p_{I}$ are as in~(\ref{Zstring}). Then $p_{I}\circ a_{I}$ is homotopic to the 
identity map. Let 
\[\overline{a}\colon\namedright{\bigvee_{I\notin K,I\neq [m]}\Sigma^{1+\vert I\vert}\vert K_{I}\vert} 
     {}{\overline{\zk}}\] 
be the wedge sum of the maps $a_{I}$. Since $\overline{\zk}$ is contractible or homotopy equivalent 
to a wedge of spheres, it is a co-$H$-space. Thus the maps $p_{I}$ can be added to give a map 
\[\overline{p}\colon\namedright{\overline{\zk}}{}
    {\hspace{-3mm}\bigvee_{I\notin K, I\neq [m]}\Sigma^{1+\vert I\vert}\vert K_{I}\vert}.\] 

\begin{proposition} 
   \label{ZKequivs} 
   Let $K$ be a neighbourly triangulation of $S^{2n+1}$ on $[m]$. Then the maps $\overline{a}$ 
   and $\overline{p}$ are both homotopy equivalences. 
\end{proposition}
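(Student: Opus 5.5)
Both $\overline{\zk}$ and $W=\bigvee_{I\notin K,I\neq[m]}\Sigma^{1+\vert I\vert}\vert K_{I}\vert$ are simply-connected and, by Proposition~\ref{partialdesuspend} and Theorem~\ref{neighbourlyinW}, homotopy equivalent to finite wedges of spheres with torsion-free homology of the same finite rank in each degree. So it suffices to show that $\overline{p}\circ\overline{a}$ induces an isomorphism in homology. Then $\overline{a}_{\ast}$ is a split monomorphism between free abelian groups of equal finite rank in each degree, and $\overline{p}_{\ast}$ is a split epimorphism between such groups. A split monomorphism or epimorphism of $\mathbb{Z}^{r}$ to itself with torsion-free cokernel or kernel of rank zero is an isomorphism, so both $\overline{a}_{\ast}$ and $\overline{p}_{\ast}$ are isomorphisms. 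Whitehead's Theorem then shows that $\overline{a}$ and $\overline{p}$ are homotopy equivalences.

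To compute $\overline{p}_{\ast}\circ\overline{a}_{\ast}$, I restrict to a summand $\Sigma^{1+\vert I\vert}\vert K_{I}\vert$ and project onto a summand $\Sigma^{1+\vert J\vert}\vert K_{J}\vert$. Since $\overline{p}$ is the sum of the $p_{J}$ using the co-$H$ structure, in homology $\overline{p}_{\ast}=\sum_{J}(p_{J})_{\ast}$. The $(J,I)$ entry is therefore $(p_{J}\circ a_{I})_{\ast}$. For $J=I$ this is the identity, since $p_{I}\circ a_{I}\simeq 1$.

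The main step is to show the off-diagonal entries vanish, or at least that the matrix is unitriangular. The plan is to suspend and use naturality of the BBCG decomposition (Theorem~\ref{BBCGsplit}). The map $\Sigma\mathcal{Z}_{K_{I}}\to\Sigma\zk$ is, up to homotopy, the inclusion of the wedge summands indexed by subsets of $I$. The retraction $\Sigma\zk\to\Sigma\mathcal{Z}_{K_{J}}$ is compatible with the splitting and is the projection onto summands indexed by subsets of $J$. I also choose the retraction $\Sigma^{1+\vert I\vert}\vert K_{I}\vert\to\mathcal{Z}_{K_{I}}$ so that after suspension it is the inclusion of the top summand of the BBCG decomposition. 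Then $\Sigma(p_{J}\circ a_{I})$ factors through the summand $\Sigma^{2+\vert I\vert}\vert K_{I}\vert$ included into $\Sigma\zk$, followed by projection to the summand $\Sigma^{2+\vert J\vert}\vert K_{J}\vert$. This composite is null unless $I=J$.

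The obstacle is justifying this compatibility. The retraction $\zk\to\mathcal{Z}_{K_{J}}$ from~\cite{DS} is induced by projection of coordinates, and the BBCG splitting is natural with respect to such projections. I must also check that the desuspended inclusion of $\Sigma^{1+\vert I\vert}\vert K_{I}\vert$ from Lemma~\ref{ZKIproperties}(c) suspends to the BBCG summand inclusion, at least in homology. If strict vanishing is hard, a fallback is to order the summands by $\vert I\vert$. A map $\Sigma^{2+\vert I\vert}\vert K_{I}\vert\to\Sigma^{2+\vert J\vert}\vert K_{J}\vert$ factoring through $\Sigma\mathcal{Z}_{K_{J}}$ via the subset-$I$ summands is zero unless $I\subseteq J$. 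Hence the matrix is unitriangular, and so still an isomorphism. Since homology is detected after one suspension, this gives the required isomorphism.
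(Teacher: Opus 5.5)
Your outline follows the paper's: show $(\overline{p}\circ\overline{a})_{\ast}$ is an isomorphism, then conclude from equal finite ranks and Whitehead's Theorem. You are also right that the cross terms $(p_{J}\circ a_{I})_{\ast}$ with $I\neq J$ must be controlled. The paper passes directly from $p_{J}\circ a_{J}\simeq 1$ for each $J$ to surjectivity of $(\overline{p}\circ\overline{a})_{\ast}$, which does not follow from that alone.

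However, the step you call the obstacle is left open, and your proposed way of closing it is not justified. Lemma~\ref{ZKIproperties}(c) only gives an abstract equivalence $\mathcal{Z}_{K_{I}}\simeq\bigvee_{L}\Sigma^{1+\vert L\vert}\vert K_{L}\vert$, with no compatibility with the BBCG map. You cannot simply \emph{choose} a section $\Sigma^{1+\vert I\vert}\vert K_{I}\vert\to\mathcal{Z}_{K_{I}}$ whose suspension is the BBCG top-summand inclusion. The bottom cells of $\mathcal{Z}_{K_{I}}$ can be as low as dimension $2n+3$ while its top cells go up to dimension $n+m$, so for large $m$ you are outside the Freudenthal range and there is no reason that map desuspends. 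Your unitriangular fallback does not remove this dependence. The claim ``zero unless $I\subseteq J$'' still uses that the $I$-summand lands in the $I$-summand of $H_{\ast}(\mathcal{Z}_{K})$, not merely in the summands indexed by subsets of $I$.

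The missing idea is a degree count from Theorem~\ref{neighbourlyinW}. For $L\notin K$, $L\neq [m]$, the reduced homology of $\Sigma^{1+\vert L\vert}\vert K_{L}\vert$ is concentrated in the single degree $n+1+\vert L\vert$. This makes your homology-level claim automatic: any section lands in the top summand in homology, because every other BBCG summand of $\mathcal{Z}_{K_{I}}$ lives in strictly lower degree.

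It also gives a shorter route that avoids BBCG naturality for projections and the triangularity fallback entirely:
\begin{itemize}
\item If $\vert I\vert\neq\vert J\vert$, then $(p_{J}\circ a_{I})_{\ast}=0$ for degree reasons.
\item If $\vert I\vert=\vert J\vert$ and $I\neq J$, then $p_{J}\circ a_{I}$ factors through $\mathcal{Z}_{K_{I}}\to\mathcal{Z}_{K}\to\mathcal{Z}_{K_{J}}$. This map is strictly the coordinate projection $\mathcal{Z}_{K_{I}}\to\mathcal{Z}_{K_{I\cap J}}$ followed by the inclusion into $\mathcal{Z}_{K_{J}}$. Since every BBCG summand of $\mathcal{Z}_{K_{I\cap J}}$ sits in degree at most $n+\vert I\vert$, we have $H_{n+1+\vert I\vert}(\mathcal{Z}_{K_{I\cap J}})=0$.
\end{itemize}
Hence $(\overline{p}\circ\overline{a})_{\ast}$ is the identity. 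With this inserted, your argument is complete, and it supplies the cross-term justification that the paper's proof leaves implicit.
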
 

\begin{proof}  
Fix $J\notin K$, $J\neq [m]$. Since $p_{J}\circ a_{J}$ is homotopic to the identity map, the composite 
\[\nameddright{\Sigma^{1+\vert J\vert}\vert K_{J}\vert}{a_{J}}{\overline{\zk}} 
      {\overline{p}}{\hspace{-3mm}\bigvee_{I\notin K, I\neq [m]}\Sigma^{1+\vert I\vert}\vert K_{I}\vert}\] 
surjects in homology onto the submodule $H_{\ast}(\Sigma^{1+\vert J\vert}\vert K_{J}\vert)$ 
of $H_{\ast}(\bigvee_{I\notin K, I\neq [m]}\Sigma^{1+\vert I\vert}\vert K_{I}\vert)$. This is 
true for every $J$, so as $\overline{a}$ is a wedge sum of the maps $a_{J}$, the composite 
$\overline{p}\circ\overline{a}$ induces a surjection in homology. As the domain and range of 
$\overline{p}\circ\overline{a}$ is the same space and it is of finite type, it follows that 
$\overline{p}\circ\overline{a}$ induces an isomorphism in homology. As spaces are simply-connected, 
Whitehead's Theorem then implies that $\overline{p}\circ\overline{a}$ is a homotopy equivalence. 

In particular, this implies that $\bigvee_{I\notin K, I\neq [m]}\Sigma^{1+\vert I\vert}\vert K_{I}\vert$ 
is a retract of $\overline{\zk}$. But by Proposition~\ref{partialdesuspend}, $\overline{\zk}$ has the same 
homotopy type as $\bigvee_{I\notin K, I\neq [m]}\Sigma^{1+\vert I\vert}\vert K_{I}\vert$. Thus 
$\overline{a}$ and $\overline{p}$ are both homotopy equivalences. 
\end{proof} 

The homotopy equivalence $\overline{p}$ will be the more useful one in what follows.

\section{One more ingredient} 

This brief section discusses an additional result by Stasheff~\cite{S} that will play a role in the 
proof of Theorem~\ref{main}. 

\begin{theorem} 
   \label{Stasheff} 
   Let $N$ be a simply-connected $n$ dimensional Poincar\'{e} Duality complex and 
   let $\overline{N}$ be the $(n-1)$-skeleton of $N$.  
   If $Y=\overline{N}\cup e^{n}$ and there is a ring isomorphism 
   $H^{\ast}(Y;\mathbb{Q})\cong H^{\ast}(N;\mathbb{Q})$ 
   then there is a rational homotopy equivalence $Y\simeq N$.~$\qqed$  
\end{theorem}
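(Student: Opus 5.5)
The plan is to work entirely rationally, and to reduce the statement to a comparison of two attaching maps $S^{n-1}\to\overline{N}$. Write $N=\overline{N}\cup_{f}e^{n}$ and $Y=\overline{N}\cup_{g}e^{n}$. After rationalizing, $\overline{N}_{\mathbb Q}$ can be taken to be a minimal rational cell complex. It then suffices to construct a self-map $h\colon\overline{N}_{\mathbb Q}\to\overline{N}_{\mathbb Q}$ with two properties: $h$ induces an isomorphism in rational homology, and $h\circ f\simeq g$ (possibly up to sign). Such an $h$ extends over the top cell to a map $N_{\mathbb Q}\to Y_{\mathbb Q}$. By the five lemma applied to the cofibrations, this extension is a rational homology isomorphism. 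Both spaces are simply-connected, so it is a rational homotopy equivalence.

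The first step is to identify what the ring isomorphism says about $f$ and $g$. Since $H^{n}$ is one dimensional for both spaces, both attaching maps have zero Hurewicz image in $H_{n-1}(\overline{N};\mathbb Q)$. Hence $f,g\in\pi_{n-1}(\overline{N})\otimes\mathbb Q$ lie in the kernel of the Hurewicz map, which rationally is spanned by iterated Whitehead products (equivalently, in a Quillen model, by decomposables). The cup products $H^{k}\otimes H^{n-k}\to H^{n}$ in $\overline{N}\cup_{\phi}e^{n}$ are read off from the quadratic part of $\phi$, that is, from its image in $(L_{\ge 1}/[L,[L,L]])_{n-2}$ in the Quillen Lie model $L$ of $\overline{N}$. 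This quadratic part is a bilinear form $q_{\phi}$ on $H_{*}(\overline{N})$. The ring isomorphism $H^{*}(Y)\cong H^{*}(N)$, composed with an automorphism of $H^{*}(\overline{N})$ that can be realized rationally on the quadratic level, lets us assume $q_{f}=q_{g}$. Poincar\'e duality for $N$ says $q_{f}$ is nondegenerate.

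The second step is to kill the higher-order difference $g-f$, which now lies in $[L,[L,L]]$. I would try the following perturbations, using the linear model of $\overline{N}$. For each $\beta\in L$ of degree $k-1$ and each generator $x$ of degree $k-1$, take the automorphism $\theta$ of the free Lie algebra $L$ that is the identity on all generators except $x$, and sends $x\mapsto x+\beta$ where $\beta$ is decomposable. Such a $\theta$ is a quasi-isomorphism, and it changes $f$ by $\theta(f)-f$. To leading order this change is the contraction of $f$'s quadratic part against $\beta$, namely a sum of terms $[\beta,x^{\ast}]$ with $x^{\ast}$ the dual generator under $q_{f}$. Because $q_{f}$ is nondegenerate, every element of $[L,[L,L]]$ of degree $n-2$ that lies in the relevant kernel can be written as such a contraction, modulo higher filtration. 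An induction on bracket length (the lower central filtration), using finite dimensionality in each degree, then produces a composite automorphism $h$ with $h(f)=g$. One must also check that $d$-compatibility holds: $d f=0=d g$ in the Quillen model, and the perturbations preserve cycles.

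The main obstacle is the surjectivity claim in the second step. One must show that, at each filtration level, the contractions $\beta\mapsto[\beta,x^{\ast}]$ hit the whole obstruction group, and that the differential of $L$ does not interfere. This is exactly where Poincar\'e duality is used. I would first treat the case where $\overline{N}$ is rationally a wedge of spheres ($d=0$), which is all that is needed in this paper. There the computation is a clean statement about free graded Lie algebras with a nondegenerate quadratic element: such an element is, up to automorphism, determined by its quadratic part (the rational analogue of $\sum[a_i,b_i]$ being inert). I would then indicate how the filtration argument extends to general $\overline{N}$.
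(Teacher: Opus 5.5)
The paper gives no proof of this statement; it quotes Stasheff's theorem. Your Quillen-model argument is therefore a different route. As a proof of the statement as posed, for an arbitrary simply-connected $\overline{N}$, it has a genuine gap, which you defer with ``I would then indicate how the filtration argument extends to general $\overline{N}$''. Two steps are unjustified when the differential $d$ of the minimal Quillen model $(\mathbb{L}(V),d)$ of $\overline{N}$ is nonzero.

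\emph{Normalising the quadratic parts.} A ring automorphism $\alpha$ of $H^{\ast}(\overline{N};\mathbb{Q})$ extends to an automorphism of $\mathbb{L}(V)$ that commutes with the quadratic part $d_{1}$ of $d$. In general it does not commute with $d_{\geq 2}$, which carries Massey-product information. So ``realized rationally on the quadratic level'' does not give a self-equivalence of $\overline{N}_{\mathbb{Q}}$, and you cannot reduce to $q_{f}=q_{g}$.

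\emph{The perturbations.} The elementary automorphism $\theta$ with $x\mapsto x+\beta$ is a map of DGLs only if $d\theta=\theta d$ on every generator. That means $d\beta=\theta(dx)-dx$ and $\theta(dy)=dy$ for $y\neq x$. This is a constraint on $\theta$ itself; it is not the condition that the perturbations preserve cycles. Consequently the obstruction at each filtration stage lives in a homology group rather than in $\mathbb{L}^{k+1}(V)_{n-2}$. Showing that the admissible contractions $[\beta,x^{\ast}]$ exhaust it is where the real work lies, and this is the content of Stasheff's theorem (closely tied to his result that $N$ is formal whenever $\overline{N}$ is). None of this is supplied.

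In the case $d=0$, i.e.\ $\overline{N}$ rationally a wedge of spheres, your argument is complete. This is the only case the paper uses, in Example~\ref{Stasheffexample}. The step you call the main obstacle is in fact easy there:
\begin{itemize}
\item Any degree-preserving linear automorphism of $V$ extends to a DGL automorphism of $(\mathbb{L}(V),0)$, so the normalisation $q_{f}=q_{g}$ is available. Replace ``up to sign'' by ``up to a nonzero rational scalar'', absorbed by $w\mapsto\lambda w$ on the top generator.
\item Poincar\'e duality for $N$ makes the suitably normalised $q_{f}$-dual basis $\{x^{\ast}\}$ of a homogeneous basis $\{x\}$ of $V$ again a basis of $V$, with $|x|+|x^{\ast}|=n-2$.
\item Since $\mathbb{L}^{k+1}(V)=[\mathbb{L}^{k}(V),V]$, the first-order map $(\beta_{x})\mapsto\sum_{x}\pm[\beta_{x},x^{\ast}]$ is onto all of $\mathbb{L}^{k+1}(V)_{n-2}$, with no kernel restriction.
\item The remaining terms of $\theta(f)-f$ lie in $\mathbb{L}^{\geq k+2}(V)$ because $2k\geq k+2$ for $k\geq 2$.
\item $\mathbb{L}^{\geq n-1}(V)_{n-2}=0$ because $V$ is concentrated in positive degrees, so the induction terminates.
\end{itemize}
Your route thus yields a self-contained proof of exactly the special case the paper needs. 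The citation to Stasheff is what covers the general statement.
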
 

\begin{example} 
  \label{Stasheffexample} 
  Let $K$ be a neighbourly triangulation of $S^{2n+1}$ on $m$ vertices. Then $\zk$ is a 
  simply-connected $(m+2n+2)$-dimensional Poincar\'{e} Duality complex. By 
  Proposition~\ref{neighbourlycohlgy}, there is a ring isomorphism 
  
  $H^{\ast}(\zk;\mathbb{Q})\cong H^{\ast}(M;\mathbb{Q})$ 
  where $M$ is a connected sum of products of two spheres. By Proposition~\ref{partialdesuspend}, 
  $\overline{\zk}$ is homotopy equivalent to a wedge of spheres, 
  say $\overline{\zk}\simeq\bigvee_{i=1}^{r} S^{n_{i}}$. Since $M$ is a connected sum of products 
  of two spheres, $\overline{M}$ is also homotopy equivalent to a wedge of spheres. The 
  isomorphism $H^{\ast}(\zk)\cong H^{\ast}(M)$ implies that 
  $H^{\ast}(\overline{\zk})\cong H^{\ast}(\overline{M})$, and therefore 
  $\overline{M}\simeq\bigvee_{i=1}^{r} S^{n_{i}}$.  Thus if $X=\bigvee_{i=1}^{r} S^{n_{i}}$ then 
  $\zk=X\cup_{f} e^{2n+m+2}$ and $M=X\cup_{g} e^{2n+m+1}$ for maps $f$ and $g$ that attach 
  the $(m+2n+2)$-cell in each case. Theorem~\ref{Stasheff} then implies that there is a rational 
  homotopy equivalence $\zk\simeq M$. 
\end{example}

\section{The proof of Theorem~\ref{main}} 
\label{sec:proof} 

Let $K$ be a triangulation of $S^{2n+1}$ on the vertex set $[m]$. There is a homotopy 
cofibration 
\[\nameddright{S^{2n+m+1}}{f}{\overline{\zk}}{}{\zk}\] 
where $f$ attaches the $(2n+m+2)$-cell to $\zk$. 
By Proposition~\ref{ZKequivs}, there is a homotopy equivalence 
\(\namedright{\overline{\zk}}{\overline{p}}{\bigvee_{I\notin K, I\neq [m]}\Sigma^{1+\vert I\vert}\vert K_{I}\vert}\). 

\begin{lemma} 
   \label{fscnull} 
   For each $I\notin K$, $I\neq [m]$, the composite 
   \(\nameddright{S^{2n+m+1}}{f}{\overline{\zk}}{p_{I}}{\Sigma^{1+\vert I\vert}\vert K_{I}\vert}\) 
   is null homotopic. 
\end{lemma}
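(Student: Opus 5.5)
The plan is to use the definition of $p_{I}$ as a composite that passes through $\zk$. By construction,
\[p_{I}\colon\namedddright{\overline{\zk}}{}{\zk}{}{\mathcal{Z}_{K_{I}}}{}{\Sigma^{1+\vert I\vert}\vert K_{I}\vert},\]
where the first map is the skeletal inclusion $\overline{\zk}\to\zk$. Precomposing with $f$ therefore factors $p_{I}\circ f$ as
\[\nameddright{S^{2n+m+1}}{f}{\overline{\zk}}{}{\zk}\]
followed by the retraction $\zk\to\mathcal{Z}_{K_{I}}$ and the projection onto $\Sigma^{1+\vert I\vert}\vert K_{I}\vert$.

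The first two maps in this factorization are consecutive maps in the homotopy cofibration $\nameddright{S^{2n+m+1}}{f}{\overline{\zk}}{}{\zk}$. Their composite is null homotopic, because the inclusion of $\overline{\zk}$ into the mapping cone of $f$ restricts to a nullhomotopic map on $S^{2n+m+1}$. Hence $p_{I}\circ f$ factors through a null homotopic map and is itself null homotopic.

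The only point needing care is that $p_{I}$ genuinely factors through $\zk$ via the map $\overline{\zk}\to\zk$ appearing in the cofibration. One must check that the left map in the definition of $p_{I}$ is the skeletal inclusion, and not some other map $\overline{\zk}\to\zk$ such as the one arising from the factorization of $\mathcal{Z}_{K\backslash i}\to\zk$ in Proposition~\ref{partialdesuspend}. Both are described as ``the'' map $\overline{\zk}\to\zk$, and in Proposition~\ref{partialdesuspend} the factorization is through the $(2n+m+1)$-skeleton followed by the skeletal inclusion. So I would take the left map of $p_{I}$ to be that skeletal inclusion, which is also the map in the cofibration defining $f$.

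If one instead wished to avoid relying on this identification, a fallback is to argue via homotopy groups. By Lemma~\ref{ZKIproperties}(b), $\mathcal{Z}_{K_{I}}$, and hence $\Sigma^{1+\vert I\vert}\vert K_{I}\vert$, is a wedge of spheres of dimension at most $2n+m$. One would then need to show $\pi_{2n+m+1}$ of that wedge receives $f$ trivially. That route is much harder, so I expect the direct cofibration argument to be the intended one, with the identification of maps the only real obstacle.
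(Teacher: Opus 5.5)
Your argument is correct and is essentially the paper's proof: $p_{I}$ is by definition the skeletal inclusion $\overline{\zk}\to\zk$ followed by maps out of $\zk$, so $p_{I}\circ f$ factors through the composite of two consecutive maps in the cofibration $S^{2n+m+1}\to\overline{\zk}\to\zk$, which is null homotopic. Your worry about identifying the left map is resolved exactly as you say, since the paper's map $\overline{\zk}\to\zk$ in~(\ref{Zstring}) is the skeletal inclusion, so the homotopy-group fallback is not needed.
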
 

\begin{proof} 
Fix an $I\notin K$, $I\neq [m]$. By definition, $p_{I}$ factors as a composite 
\(\nameddright{\overline{\zk}}{}{\zk}{}{\Sigma^{1+\vert I\vert}\vert K_{I}\vert}\) 
where the left map is the skeletal inclusion. Thus $p_{I}\circ f$ factors through the composite 
\(\nameddright{S^{2n+m+1}}{f}{\overline{\zk}}{}{\zk}\), 
which is null homotopic since it is two consecutive maps in a homotopy cofibration. Therefore 
$p_{I}\circ f$ is null homotopic. 
\end{proof} 

Let $p$ be the composite 
\[p\colon\nameddright{\overline{\zk}}{\overline{p}} 
     {\hspace{-3mm}\bigvee_{I\notin K,I\neq [m]}\Sigma^{1+\vert I\vert}\vert K\vert} 
     {}{\hspace{-3mm}\prod_{I\notin K,I\neq [m]}\Sigma^{1+\vert I\vert}\vert K\vert}\]  
where the right map is the inclusion of the wedge into the product. By definition, 
$\overline{p}$ is the sum of the maps $p_{I}$, so $p$ is the product of the maps $p_{I}$. 
By Lemma~\ref{fscnull}, each composite $p_{I}\circ f$ is null homotopic, implying that $p\circ f$ 
is null homotopic. 

It will be more notationally convenient if $\overline{\zk}$ and $\overline{p}$ are written in 
terms of wedges of spheres. By Theorem~\ref{neighbourlyinW}, each $\vert K_{I}\vert$ is 
contractible or homotopy equivalent to a finite wedge of spheres. Thus 
$\Sigma^{1+\vert I\vert}\vert K_{I}\vert$ is contractible or homotopy equivalent to a finite wedge 
of spheres. Taking the wedge sum of these homotopy equivalences for $I\notin K$, $I\neq [m]$ 
gives a composite of homotopy equivalences 
\[\overline{q}\colon\nameddright{\overline{\zk}}{\overline{p}} 
      {\hspace{-3mm}\bigvee_{I\notin K,I\neq [m]}\Sigma^{1+\vert I\vert}\vert K\vert}{\simeq} 
      {\bigvee_{k=1}^{\ell} S^{n_{k}}}\] 
for some $\ell\geq 1$. In what follows, the homotopy equivalence $\overline{q}$ will be 
suppressed and $\overline{\zk}$ will simply be regarded as being the wedge of spheres 
$\bigvee_{k=1}^{\ell} S^{n_{k}}$. Let 
\[q\colon\namedright{\overline{\zk}} {}{\prod_{k=1}^{\ell} S^{n_{k}}}\]  
be the inclusion of the wedge into the product. Then $q$ is a product of maps, 
each of which factors through $p_{I}$ for some $I$, implying that each composes trivially with $f$. 
Thus $q\circ f$ is null homotopic. 

The null homotopy for $q\circ f$ implies that $q$ extends across the skeletal inclusion 
\(\namedright{\overline{\zk}}{}{\zk}\) 
to a map 
\(q'\colon\namedright{\zk}{}{\prod_{k=1}^{\ell} S^{n_{k}}}\). 
Define the spaces $E$, $F$ and $G$, and the maps $g$ and $\phi$ by the homotopy pullback diagram 
\begin{equation} 
  \label{EFGpb} 
  \diagram 
     E\rto\ddouble & F\rto^-{g}\dto^{\phi} & G\dto \\ 
     E\rto & \overline{\zk}\rto\dto^{q} & \zk\dto^{q'} \\ 
     & \prod_{k=1}^{\ell} S^{n_{k}}\rdouble & \prod_{k=1}^{\ell} S^{n_{k}}. 
  \enddiagram 
\end{equation}  
Since there is a homotopy cofibration 
\(\nameddright{S^{2n+m+1}}{f}{\overline{\zk}}{}{\zk}\) 
and spaces are simply-connected, the Serre exact sequence or the Blakers-Massey Theorem 
implies that $E$ is $(2n+m)$-connected, $H^{2n+m+1}(E)\cong\mathbb{Z}$, and $f$ can be 
written as the composite 
\(\nameddright{S^{2n+m+1}}{}{E}{}{\overline{\zk}}\) 
where the left map is the inclusion of the bottom cell. Let $\lambda$ be the composite 
\[\lambda\colon\nameddright{S^{2n+m+1}}{}{E}{}{F}.\] 
Then $\lambda$ is a lift of $f$ to $F$, and has the property that $g\circ\lambda$ is null 
homotopic. 

It will be easier in what follows to work with adjoints. Let $\widetilde{f}$ and $\widetilde{\lambda}$ 
be the adjoints of $f$ and $\lambda$ respectively. Then taking adjoints gives a homotopy 
commutative diagram 
\begin{equation} 
  \label{Fadjlift} 
  \diagram 
      S^{2n+m}\rto^-{\widetilde{\lambda}}\drto_{\widetilde{f}} & \Omega F\dto^{\Omega\phi} \\ 
      & \Omega\overline{\zk}.  
  \enddiagram 
\end{equation}

%
%
%

The homotopy type of $\Omega F$ and the homotopy class of $\Omega\phi$ are given by the 
Hilton-Milnor Theorem. To be more explicit, some notation is necessary. For $1\leq j\leq\ell$, let 
\[\nu_{j}\colon\namedright{S^{n_{j}}}{}{\bigvee_{k=1}^{\ell} S^{n_{k}}}\] 
be the inclusion of the $j^{th}$-wedge summand. Let $L\langle v_{1},\ldots,v_{\ell}\rangle$ be the 
free graded Lie algebra generated by elements $v_{1},\ldots,v_{\ell}$ with $\vert v_{j}\vert = n_{i}-1$. 
Let $\mathcal{B}$ be a basis for the submodule of commutators of length~$\geq 2$ in 
$L\langle v_{1},\ldots,v_{\ell}\rangle$. Each $\beta\in\mathcal{B}$ corresponds to an iterated 
commutator of the elements $v_{1},\ldots,v_{\ell}$. Let $d(\beta)$ be the degree of $\beta$. Let 
\[w_{\beta}\colon\namedright{S^{d(\beta)+1}}{}{\bigvee_{k=1}^{\ell} S^{n_{k}}}\] 
be the iterated Whitehead product corresponding to $\beta$, where each instance of $v_{j}$ 
in the bracket $\beta$ is replaced by the map $\nu_{j}$ in the Whitehead product $w_{\beta}$. 

\begin{theorem}[Hilton-Milnor] 
   \label{HM} 
   There is a homotopy equivalence 
   $\Omega F\simeq\prod_{\beta\in\mathcal{B}}\Omega S^{d(\beta)+1}$ 
   under which the restriction of the map $\Omega\phi$ to $\Omega S^{d(\beta)+1}$ 
   is $\Omega w_{\beta}$.~$\qqed$ 
\end{theorem}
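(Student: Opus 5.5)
We would deduce this from the classical Hilton--Milnor theorem for $\Omega(\bigvee_{k=1}^{\ell} S^{n_{k}})$ together with the fact that the loop fibration of the wedge-into-product inclusion splits. Throughout, the product over $\mathcal{B}$ is read as the weak (restricted) product. Since each $n_{k}\geq 3$, in each fixed degree only finitely many factors are non-trivially connected, so this causes no finiteness problems.

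\textbf{Step 1: identify $F$ and split the loop fibration.} In diagram~(\ref{EFGpb}), $F$ is the homotopy fibre of $q\colon\overline{\zk}=\bigvee_{k=1}^{\ell}S^{n_{k}}\to\prod_{k=1}^{\ell}S^{n_{k}}$, and $\phi$ is the fibre inclusion. Looping gives a fibration
\[\Omega F\stackrel{\Omega\phi}{\longrightarrow}\Omega\big(\textstyle\bigvee_{k} S^{n_{k}}\big)\stackrel{\Omega q}{\longrightarrow}\textstyle\prod_{k}\Omega S^{n_{k}}.\]
The map $\Omega q$ has a right homotopy inverse $s$: take the loop-multiplicative product of the maps $\Omega\nu_{k}$. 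Multiplying $s$ by $\Omega\phi$ in the loop structure of $\Omega(\bigvee_{k}S^{n_{k}})$ then gives a homotopy equivalence
\[\textstyle\prod_{k}\Omega S^{n_{k}}\times\Omega F\simeq\Omega\big(\bigvee_{k}S^{n_{k}}\big).\]

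\textbf{Step 2: compare with classical Hilton--Milnor.} The classical theorem gives
\[\Omega\big(\textstyle\bigvee_{k}S^{n_{k}}\big)\simeq\prod_{\beta}\Omega S^{d(\beta)+1},\]
where $\beta$ runs over a Hall basis of $L\langle v_{1},\ldots,v_{\ell}\rangle$ and the factor indexed by $\beta$ maps in by $\Omega w_{\beta}$. The length-one basis elements $v_{k}$ contribute exactly the factors $\Omega S^{n_{k}}$, via $\Omega\nu_{k}$. For $\beta$ of length $\geq 2$, the composite $q\circ w_{\beta}$ is null homotopic, because Whitehead products vanish in a product of spaces. Hence each $\Omega w_{\beta}$ with $\beta\in\mathcal{B}$ lifts through $\Omega\phi$ to a map $\Omega S^{d(\beta)+1}\to\Omega F$, and we can choose the lift so that composing with $\Omega\phi$ gives $\Omega w_{\beta}$ exactly. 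Multiplying these lifts in $\Omega F$ gives a map
\[\theta\colon\textstyle\prod_{\beta\in\mathcal{B}}\Omega S^{d(\beta)+1}\longrightarrow\Omega F,\]
whose restriction to each factor, followed by $\Omega\phi$, is $\Omega w_{\beta}$.

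\textbf{Step 3: $\theta$ is an equivalence.} We would check that $\theta$ is a homology isomorphism and then apply Whitehead's theorem, which is legitimate since all spaces here are simply-connected. The composite
\[\textstyle\prod_{k}\Omega S^{n_{k}}\times\prod_{\beta\in\mathcal{B}}\Omega S^{d(\beta)+1}\xrightarrow{\,s\cdot(\Omega\phi\circ\theta)\,}\Omega\big(\bigvee_{k}S^{n_{k}}\big)\]
is, up to reordering the factors, the classical Hilton--Milnor equivalence. Comparing with the splitting of Step 1, the map $1\times\theta$ is then an equivalence, so $\theta$ is one as well.

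The main obstacle is that reordering. Hilton--Milnor uses a specific ordered product of loop multiplications, and reordering factors in a non-homotopy-commutative loop space is not a homotopy-invariant operation. We would handle it at the level of homology. The Bott--Samelson theorem gives $H_{\ast}(\Omega(\bigvee_{k}S^{n_{k}}))\cong T(v_{1},\ldots,v_{\ell})\cong U(L\langle v_{1},\ldots,v_{\ell}\rangle)$. By Poincar\'{e}--Birkhoff--Witt, multiplying the images of the homologies of the factors in any order gives an isomorphism, since the images of the bottom classes form a basis for the Lie algebra. A Poincar\'{e} series count in each degree then shows that $H_{\ast}(\theta)$ is an isomorphism.
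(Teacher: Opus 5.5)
The paper gives no proof of this statement; it is quoted as the classical Hilton--Milnor theorem. Your Steps 1 and 2 are the standard way to obtain this fibre version from the wedge version: split the looped fibration of $q$, lift each $w_\beta$ of weight $\geq 2$ to $F$, and multiply the lifts. Two points in Step 2 need tightening.
\begin{itemize}
\item[(i)] $\mathcal{B}$ must be the weight-$\geq 2$ part of a Hall basis of the \emph{ungraded} free Lie ring (basic products), since that is what Hilton--Milnor is indexed by.
\item[(ii)] The reason $q\circ w_\beta\simeq *$ is not that Whitehead products vanish in a product. For example, $[\iota,\iota]$ on a single even-dimensional factor $S^{n_k}$ does not vanish. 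The correct reason is that $\pi_*(\prod_k S^{n_k})=\bigoplus_k\pi_*(S^{n_k})$ with Whitehead products computed coordinatewise. A basic product of weight $\geq 2$ involves two distinct generators, so every coordinate is an iterated bracket with a null entry.
\end{itemize}

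The real problem is Step 3, the obstacle you flagged yourself. Your PBW argument does not work as stated. Nothing forces the $n_k$ to be odd; for the $m$-gon, spheres of both parities occur. When the classes $u_k\in H_{n_k-1}(\Omega\bigvee_k S^{n_k})$ have odd degree, the Hurewicz images $\hat\beta$ of the bottom classes are \emph{not} a basis of the free graded Lie algebra in $T(u_1,\ldots,u_\ell)$. For instance, $[u_k,u_k]=2u_k^2$ is a Lie element of multidegree $2e_k$, and no $\hat\beta$ has that multidegree. Graded PBW also allows only exponent $\leq 1$ on odd basis elements, whereas each $H_*(\Omega S^{d+1})\cong\mathbb{Z}[y]$ contributes all powers. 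Finally, a rational PBW argument plus a Poincar\'e series count yields only a rational equivalence. You would need the ordered monomials to span over $\mathbb{Z}$ (or over every $\mathbb{F}_p$), and graded PBW over $\mathbb{Z}$ does not give that for free.

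The obstacle disappears if you work with homotopy groups instead of homology. Loop multiplication induces addition on $\pi_*$, and $\pi_*$ of the weak product is $\bigoplus_\beta\pi_*(\Omega S^{d(\beta)+1})$. Hence, for \emph{any} ordering and bracketing, the product map $\prod^{w}_\beta\Omega S^{d(\beta)+1}\to\Omega(\bigvee_k S^{n_k})$ induces the same homomorphism, namely $\sum_\beta(\Omega w_\beta)_*$, including the weight-one terms $(\Omega\nu_k)_*$. So your composite $s\cdot(\Omega\phi\circ\theta)$ is a weak equivalence because the classical Hilton--Milnor map is, and the rest of your Step 3 then goes through. Alternatively, order $\mathcal{B}$ as in the classical theorem, where basic products are ordered by weight so the $\Omega S^{n_k}$ factors come first; then no reordering is needed at all.
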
 

\begin{proof}[Proof of Theorem~\ref{main}] 
We identify the homotopy class of the attaching map 
\(\namedright{S^{2n+m+1}}{f}{\bigvee_{k=1}^{\ell} S^{n_{k}}}\) 
for the top cell of $\zk$. It is equivalent to identify the homotopy class of the adjoint 
\(\namedright{S^{2n+m}}{\widetilde{f}}{\Omega\overline{\zk}}\) 
of $f$. By~(\ref{Fadjlift}), $\widetilde{f}$ factors as a composite 
\(\nameddright{S^{2n+m}}{\widetilde{\lambda}}{\Omega F}{\Omega\phi}{\Omega\overline{\zk}}\) 
and by Theorem~\ref{HM}, $\Omega\phi\circ\widetilde{\lambda}$ may be rewritten as a composite 
\[\nameddright{S^{2n+m}}{\kappa}{\prod_{\beta\in\mathcal{B}}\Omega S^{d(\beta)+1}}{W} 
    {\Omega\overline{\zk}}\] 
where $W=\prod_{\beta\in\mathcal{B}}\Omega w_{\beta}$. The factors in $W\circ\kappa$ 
are analyzed one-by-one. Define 
\[\kappa_{\beta}\colon\namedright{S^{2n+m}}{}{\Omega S^{d(\beta)+1}}\] 
by projecting $\kappa$ to the factor indexed by $\beta$. Then $W\circ\kappa$ 
is homotopic to the sum of the maps $\Omega w_{\beta}\circ\kappa_{\beta}$. 
There are three cases. 
\smallskip 

\noindent 
\textit{Case 1: $2n+m<d(\beta)$}. Then the connectivity of $\Omega S^{d(\beta)+1}$ is greater 
than or equal to the dimension of $S^{2n+m}$, implying that the map 
\(\namedright{S^{2n+m}}{\kappa_{\beta}}{\Omega S^{d(\beta)+1}}\) 
is null homotopic. Thus $\Omega w_{\beta}\circ\kappa_{\beta}$ is null homotopic. 
\smallskip 

\noindent 
\textit{Case 2: $2n+m=d(\beta)$}. Then the map 
\(\namedright{S^{2n+m}}{\kappa_{\beta}}{\Omega S^{d(\beta)+1}}\) 
is some multiple $t$ of the inclusion of the bottom cell. This inclusion is the suspension map 
\(\namedright{S^{2n+m}}{E}{\Omega S^{2n+m+1}}\),  
defined as the adjoint of the identity map on $S^{2n+m+1}$. Thus 
$\Omega w_{\beta}\circ\kappa_{\beta}\simeq t\cdot (\Omega w_{\beta}\circ E)$. 
Observe that the adjoint of $\Omega w_{\beta}\circ E$ is the iterated Whitehead product 
\(\namedright{S^{d(\beta)+1}}{w_{\beta}}{\overline{\zk}}\). 
Therefore, as $\Omega w_{\beta}\circ\kappa_{\beta}$ is a component in the homotopy 
class of $\widetilde{f}$, on taking adjoints we obtain $t\cdot w_{\beta}$ as a component in the 
homotopy class of $f$. 

By Example~\ref{Stasheffexample}, there is a rational homotopy equivalence 
$\zk\simeq M$, where $M$ is a connected sum of products of two spheres. Thus the attaching 
map $f$ for the top cell of $\zk$ is rationally homotopic to 
$\sum_{1\leq i,j\leq r} s_{i,j}\cdot [\nu_{i},\nu_{j}]$, 
where $s_{i,j}=1$ if $S^{n_{i}}\times S^{n_{j}}$ appears as one of the factors in the connected 
sum for $M$ and $s_{i,j}=0$ otherwise. 

The homotopy fibration 
\(\nameddright{F}{\phi}{\bigvee_{k=1}^{\ell} S^{n_{k}}}{}{\prod_{k=1}^{\ell} S^{n_{k}}}\) 
splits after looping to give a homotopy equivalence 
\[\Omega(\bigvee_{k=1}^{\ell} S^{n_{k}})\simeq\prod_{k=1}^{\ell} S^{n_{k}}\times\Omega F,\] 
implying that $\Omega\phi$ has a left homotopy inverse. By Theorem~\ref{HM}, 
the restriction of $\Omega\phi$ to $\Omega S^{d({\beta})+1}$ is $\Omega w_{\beta}$, implying 
that $\Omega w_{\beta}$ has a left homotopy inverse. In particular, $\Omega w_{\beta}$ is 
rationally nontrivial, implying that $w_{\beta}$ is rationally nontrivial. Thus as $t\cdot w_{\beta}$ 
is a component in the homotopy class of $f$, it is also a component in the rational homotopy 
class of $f$. Hence $t\neq 0$ if and only if $w_{\beta}=[\nu_{i},\nu_{j}]$ for some $i,j$ with 
$S^{n_{i}}\times S^{n_{j}}$ appearing as one of the factors in the connected sum $M$. 
Therefore $\Omega w_{\beta}\circ\kappa_{\beta}$ is non-trivial if and only if 
$w_{\beta}=[\nu_{i},\nu_{j}]$ for some $i,j$ with $S^{n_{i}}\times S^{n_{j}}$ appearing as 
one of the factors in the connected sum $M$. In the non-trivial case, the 
Whitehead product $[\nu_{i},\nu_{j}]$ is detected by the cup product in cohomology, so 
the value of $t$ can be determined. By Proposition~\ref{neighbourlycohlgy}, there is an integral 
ring isomorphism $H^{\ast}(\zk)\cong H^{\ast}(M)$, so $t=1$. Thus 
$\Omega w_{\beta}\circ\kappa_{\beta}\simeq [\nu_{i},\nu_{j}]$. 
\smallskip 

\noindent 
\textit{Case 3: $2n+m>d(\beta)$}. Then the map 
\(\namedright{S^{2n+m}}{\kappa_{\beta}}{\Omega S^{d(\beta)+1}}\) 
has an adjoint that represents a homotopy group of the form $\pi_{r+t}(S^{r})$ for some $t\geq 1$. 
It is well known that such a group is torsion unless $r$ is even and $t=r-1$. This results in 
two sub-cases. 
\smallskip 

\noindent 
\textit{Case 3(i): $d(\beta)+1$ is even, $2n+m=2d(\beta)$ and $\kappa_{\beta}$ is rationally 
nontrivial}. Then the adjoint  
\(\widetilde{\kappa}_{\beta}\colon\namedright{S^{2n+m+1}}{}{S^{d(\beta)+1}}\) 
of $\kappa_{\beta}$ satisfies $\widetilde{\kappa}_{\beta}\simeq t\cdot [\iota,\iota]$, where $\iota$ is the 
generator of $\pi_{d(\beta)+1}(S^{d(\beta)+1})$ and $t\neq 0$. The naturality of the Whitehead product then 
implies that $w_{\beta}\circ\widetilde{\kappa}_{\beta}$ is a Whitehead product of length twice 
the length of $w_{\beta}$. As the length of $w_{\beta}$ is at least $2$, the length of 
$w_{\beta}\circ\widetilde{\kappa}_{\beta}$ is at least $4$. This implies that the homotopy class 
of $f$ has a component involving a Whitehead product of length larger than $2$, and therefore 
so does the rational homotopy class of $f$, but we have already seen in Case 2 that this cannot 
occur. So it must be that $t=0$, a contradiction. Thus Case 3(i) does not occur. 
\smallskip 

\noindent 
\textit{Case 3(ii): $2n+m>d(\beta)$ and $\kappa_{\beta}$ is a torsion class}. We will show that 
$\kappa_{\beta}$ is null homotopic. This begins by comparing the number of vertices 
supporting~$w_{\beta}$ to $m$. The map $w_{\beta}$ is an iterated Whitehead product of the form 
$w_{\beta}=[\nu_{1},\ldots,[\nu_{r-1},\nu_{r}]]$ for some $r\geq 2$. As 
each $\nu_{i}$ is a map 
\(\namedright{S^{n_{i}}}{}{\overline{\zk}}\), 
the iterated Whitehead product 
\(\namedright{S^{d(\beta)+1}}{w_{\beta}}{\overline{\zk}}\) 
satisfies $d(\beta)+1=(\sum_{i=1}^{r}n_{i})-r+1$. By assumption, $d(\beta)<2n+m$, implying that 
\begin{equation} 
  \label{wbetadegree} 
  (\textstyle\sum_{i=1}^{r} n_{i})-r<2n+m. 
\end{equation}  
On the other hand, as $K$ is neighbourly, it is $n$-neighbourly, so Lemma~\ref{neighbourlysupport} 
implies that each~$S^{n_{i}}$ in the wedge sum for $\overline{\zk}$ retracts off a 
$\mathcal{Z}_{K_{I_{i}}}$ for some $I_{i}$ with $\vert I_{i}\vert= n_{i}-n-1$.  
Thus $w_{\beta}$ is supported by at most $\sum_{i=1}^{r} (n_{i}-n-1)=(\sum_{i=1}^{r} n_{i})-r(n+1)$  
vertices. Since $r\geq 2$, the largest number of vertices needed to support $w_{\beta}$ is 
\begin{equation} 
  \label{wbetavertices} 
  (\textstyle\sum_{i=1}^{r} n_{i})-2(n+1). 
\end{equation}  
Comparing~(\ref{wbetadegree}) and~(\ref{wbetavertices}) we obtain 
$(\sum_{i=1}^{r} n_{i})-2(n+1)<m+r-2$, and this holds for any $r\geq 2$. In particular, 
from $r=2$ we obtain 
\begin{equation} 
  \label{wbetafullsc} 
  \textstyle(\sum_{i=1}^{r} n_{i})-2(n+1)<m. 
\end{equation}  

Tying things together, $w_{\beta}$ is a Whitehead product on $S^{n_{1}}\vee\cdots\vee S^{n_{r}}$, 
which is a subwedge of $\overline{\zk}$. The vertex set supporting $w_{\beta}$ is the union 
of those for each $S^{n_{i}}$, which is $I_{1}\cup\cdots\cup I_{r}$. 
By~(\ref{wbetavertices}) and~(\ref{wbetafullsc}), the number of vertices supporting $w_{\beta}$ is 
strictly less than $m$. Thus $I_{1}\cup\cdots\cup I_{r}$ is a proper subset of~$[m]$. 
By Proposition~\ref{partialdesuspend}, the only full subcomplex of $\zk$ 
that is not a retract of $\overline{\zk}$ is $K_{I}$ for $I=[m]$. Therefore 
$\mathcal{Z}_{K_{I_{1}\cup\cdots\cup I_{r}}}$ is a retract of both $\overline{\zk}$ and $\zk$. 
In particular, as $\overline{\zk}$ is homotopy equivalent to a wedge of spheres, 
so is $\mathcal{Z}_{K_{I_{1}\cup\cdots\cup I_{r}}}$, and therefore 
$S^{n_{1}}\vee\cdots\vee S^{n_{r}}$ is a retract of $\mathcal{Z}_{K_{I_{1}\cup\cdots\cup I_{r}}}$.  
Thus there is a commutative diagram 
\begin{equation} 
  \label{wedgeretract} 
  \diagram 
      S^{n_{1}}\vee\cdots\vee S^{n_{r}}\rto\drdouble & \overline{\zk}\rto^-{j}\dto & \zk\dto \\ 
      & S^{n_{1}}\vee\cdots\vee S^{n_{r}}\rdouble & S^{n_{1}}\vee\cdots\vee S^{n_{r}}  
  \enddiagram 
\end{equation}   
where $j$ is the skeletal inclusion. 

In general, if $I\subseteq [m]$ then the retraction  
\(\nameddright{\mathcal{Z}_{K_{I}}}{}{\zk}{}{\mathcal{Z}_{K_{I}}}\) 
has the property that the right map is determined by a projection. By its definition, $\zk$ is 
a union of products of $m$ factors, each of which is a subspace of $D^{2}$. Projecting onto 
the factors in $I$ gives a composite 
\(\nameddright{\zk}{}{\prod_{i=1}^{m} D^{2}}{}{\prod_{j\in I} D^{2}}\) 
whose image is $\mathcal{Z}_{K_{I}}$. In our case, this means that the map 
\(\namedright{\zk}{}{\bigvee_{i=1}^{r} S^{n_{i}}}\) 
obtained as a retract induced by a full subcomplex is obtained through projection onto 
vertex coordinates. Thus when~(\ref{wedgeretract}) is combined with the homotopy 
pullback~(\ref{EFGpb}) defining $F$ and $G$ we obtain a homotopy fibration diagram 
\[\diagram 
      H\rto\dto & F\rto^-{g}\dto^{\phi} & G\rto\dto & H\dto \\ 
      \bigvee_{i=1}^{r} S^{n_{i}}\rto\dto^{a} & \overline{\zk}\rto^-{j}\dto^{q} & \zk\rto\dto^{q'} 
          & \bigvee_{i=1}^{r} S^{n_{i}}\dto^{a} \\ 
      \prod_{i=1}^{r} S^{n_{i}}\rto^-{b} & \prod_{k=1}^{\ell} S^{n_{k}}\rdouble 
          & \prod_{k=1}^{\ell} S^{n_{k}}\rto^-{c} & \prod_{i=1}^{r} S^{n_{i}} 
  \enddiagram\] 
where the columns are homotopy fibrations, $a$ is the inclusion of the wedge into the product 
and its homotopy fibre is defined to be $H$, $b$ is the inclusion, $c$ is a projection, 
and $c\circ b$ is the identity map. As the middle and bottom rows in this homotopy fibration 
diagram are identity maps, the top row is a homotopy equivalence. The naturality of the 
Hilton-Milnor decomposition with respect to inclusions and projections of wedges implies 
that the factors in the homotopy decomposition for $\Omega F$ that come from $\Omega H$ 
map through $\Omega g$ to retract off $\Omega G$. Consequently, as $\lambda$ factors through 
the homotopy fibre of $g$ and as $w_{\beta}$ is a Whitehead product on the subwedge 
$\bigvee_{i=1}^{r} S^{n_{i}}$ of $\bigvee_{k=1}^{\ell} S^{n_{k}}$, the composite 
\(\nameddright{S^{2n+m}}{\widetilde{\lambda}}{\Omega F}{\mbox{\tiny proj}}{\Omega S^{d(\beta)+1}}\) 
factors through $\Omega g$ and so is null homotopic. This composite is exactly the definition 
of $\kappa_{\beta}$. Hence $\kappa_{\beta}$ is null homotopic, implying that 
$\Omega w_{\beta}\circ\kappa_{\beta}$ is null homotopic. 
\smallskip 

%

\noindent 
\textit{Conclusion}. 
The map $\widetilde{f}$ is a product of the maps $\Omega w_{\beta}\circ\kappa_{\beta}$. 
The three cases combine to show that the only factors that are nontrivial 
occur when $2n+m=d(\beta)$, and in these cases the factors are of the form 
\(\llnameddright{S^{d(\beta)}}{E}{\Omega S^{d(\beta)+1}}{\Omega [\nu_{i},\nu_{j}]}{\Omega\overline{\zk}}\) 
where $S^{n_{i}}\times S^{n_{j}}$ appears as a factor in the connected sum $M$ 
satisfying $H^{\ast}(\zk)\cong H^{\ast}(M)$. Thus, after taking adjoints, 
$f$ is homotopic to the sum of the Whitehead products 
\(\llnamedright{S^{d(\beta)+1}}{[\nu_{i},\nu_{j}]}{\overline{\zk}}\) 
that appear in the attaching map for $M$. In other words, $f$ is homotopic to the 
attaching map for $M$. Hence $\zk\simeq M$. 
\end{proof}

\section{Extending to generalized moment-angle complexes} 
\label{sec:gmam} 

This section proves Theorem~\ref{maingmam} by discussing how the proof of Theorem~\ref{main} 
extends to generalized moment-angle complexes $(D^{k},S^{k-1})^{K}$ for $k\geq 2$. This 
begins with the preliminary material in Section~\ref{sec:neighbourly}. 

If $K$ is a triangulation of $S^{2n+1}$ on the vertex set $[m]$ then by~\cite[4.2.11]{BP} 
$(D^{k},S^{k-1})^{K}$ is a manifold of dimension $2n+m(k-1)+2$. Let $\overline{(D^{k},S^{k-1})^{K}}$ 
be the $(2n+m(k-1)+1)$-skeleton of $(D^{k},S^{k-1})^{K}$, so there is a homotopy cofibration 
\[\nameddright{S^{2n+m(k-1)+1}}{g}{\overline{(D^{k},S^{k-1})^{K}}}{}{(D^{k},S^{k-1})^{K}}\] 
where $g$ attaches the $(2n+m(k-1)+2)$-cell.  

There is an analogue of Theorem~\ref{BBCGsplit} for all polyhedral products in~\cite{BBCG1}, 
not just $\Sigma\zk$. In the case of generalized moment-angle complexes it is a natural 
homotopy equivalence that takes the form 
\[\Sigma(D^{k},S^{k-1})^{K}\simeq\bigvee_{I\notin K}\Sigma^{2+(k-1)\vert I\vert}\vert K_{I}\vert.\] 
In the $\zk$ case each wedge summand $\Sigma^{2+\vert I\vert}\vert K_{I}\vert$ 
is more precisely written as $\Sigma^{2} (S^{1})^{\wedge\vert I\vert}\wedge\vert K_{I}\vert$, where 
$(S^{1})^{\wedge\vert I\vert}$ means take the smash product of $\vert I\vert$ copies of $S^{1}$. 
In the case of $(D^{k},S^{k-1})^{K}$, each~$S^{1}$ is replaced by $S^{k-1}$. 

The arguments proving Theorem~\ref{ZK/i} in~\cite{ST} and Proposition~\ref{partialdesuspend}  
both go through without change when $\zk$ is replaced by $(D^{k},S^{k-1})^{K}$, Consequently, 
if $K$ is a neighbourly 
triangulation of $S^{2n+1}$ on the vertex set $[m]$ then the BBCG decomposition for 
$\Sigma (D^{k},S^{k-1})^{K\backslash i}$ desuspends for all $i\in [m]$ and there is a homotopy 
equivalence 
\[\overline{(D^{k},S^{k-1})^{K}}\simeq\bigvee_{I\notin K,I\neq [m]}\Sigma^{1+(k-1)\vert I\vert}\vert K_{I}\vert\] 
where each $\vert K_{I}\vert$ is either contractible or homotopy equivalent to 
$\bigvee_{i=1}^{t_{I}} S^{n}$ for some finite positive integer $t_{I}$. 
The alternative homotopy equivalence for $\overline{\zk}$ in Proposition~\ref{ZKequivs} now follows 
for the $(D^{k},S^{k-1})^{K}$ case by the same argument, noting that the retraction property for 
moment-angle complexes with respect to full subcomplexes in~\cite{DS} holds, in fact, for all 
polyhedral products. 

Theorem~\ref{neighbourlyinW} is a property of $K$ rather than $\zk$. Thus the analogue 
of Corollary~\ref{neighbourlysupport} states that there is a homotopy equivalence 
\[\overline{(D^{k},S^{k-1})^{K}}\simeq\bigvee_{k=1}^{\ell} S^{n_{k}}\] 
where each $S^{n_{k}}$ retracts off a $\mathcal{Z}_{K_{I_{k}}}$ for some $I_{k}$ with 
$(k-1)\vert I_{k}\vert=n_{k}-n-1$. 

The ring isomorphism between the cohomology of $\zk$ and that of a connected sum of products 
of two spheres in Proposition~\ref{neighbourlycohlgy} can be appropriately modified. The 
identification of the cup product in terms of the full subcomplexes $K_{I}$ that appear 
in the decomposition of $\Sigma\zk$ holds in an analogous way for $(D^{k},S^{k-1})^{K}$ 
by replacing the reference to~\cite[Theorem 4.5.8]{BP} with one to~\cite[Theorem 1.9]{BBCG2}.  
The remainder of the argument stays the same, implying that there is a ring isomorphism 
between $H^{\ast}((D^{k},S^{k-1})^{K})$ and the cohomology of a connected sum of products 
of two spheres. Example~\ref{Stasheffexample} then modifies in the same way. 

\begin{proof}[Proof of Theorem~\ref{maingmam}] 
The argument in Section~\ref{sec:proof} for $\zk$ is entirely homotopy theoretic and applies 
equally to $(D^{k},S^{k-1})^{K}$ as it does to $\zk$, once the connectivity and dimension conditions 
used in the proof of Theorem~\ref{main} are modified appropriately. 

The dimensions of the spheres in the decomposition for $\overline{(D^{k},S^{k-1})^{K}}$ 
are different but will still be written as $\bigvee_{k=1}^{\ell} S^{n_{i}}$ for some $n_{i}$. 
The comparison between $2n+m$ and $d(\beta)$ in the cases for the proof of 
Theorem~\ref{main} now become a comparison between $2n+m(k-1)$ and $d(\beta)$, 
where the degree of $d(\beta)$ has changed corresponding to the degree change for 
the $n_{i}$'s. The arguments for Cases 1, 2 and 3(i) are now identical.  
For Case 3(ii), equation~(\ref{wbetadegree}) is replaced by $(\sum_{i=1}^{r} n_{i})-r<2n+m(k-1)$ 
and~(\ref{wbetavertices}) is replaced by $\frac{1}{k-1}((\sum_{i=1}^{r} n_{i})-2(n+1))$. Their 
comparison in~(\ref{wbetafullsc}) becomes 
$\frac{1}{k-1}((\sum_{i=1}^{r} n_{i})-2(n+1))<m(k-1)$. 
The remainder of the argument for Case 3(ii) is now the same. 
\end{proof}

\bibliographystyle{amsalpha}

\end{document}